\definecolor{amaranth}{rgb}{0.9, 0.17, 0.31}	
\definecolor{myblue}{rgb}{0.2,0,0.9}
\definecolor{auburn}{rgb}{0.43, 0.21, 0.1}
\definecolor{bittersweet}{rgb}{1.0, 0.44, 0.37}
\definecolor{blue-violet}{rgb}{0.54, 0.17, 0.89}
\newtheorem{pro}{Proposition}[section]
\newtheorem{pr}{Problem}
\newtheorem{theorem}{Theorem}[section]
\newtheorem{cor}{Corollary}[section]
\newtheorem{lem}{Lemma}[section]
\newtheorem{as}{Assumption}
\def\half{\frac{1}{2}}
\def\d{\delta}
\def\e{\epsilon}
\def\g{\gamma}
\def\m{\mu}
\def\n{\nu}
\def\s{\sigma}
\def\t{\theta}
\def\be{\begin{eqnarray}}
\def\ee{\end{eqnarray}}
\begin{document}
	
	\title{\LARGE {\bf A Unified Approach to Retirement  and Consumption-Portfolio Choice\footnote{ Junkee Jeon is supported by the National Research Foundation of Korea (NRF) grant funded by the Korea government [Grant No. NRF-2020R1C1C1A01007313]. Hyeng Keun Koo is supported by NRF grant [Grat No. NRF-2020R1A2C1A01006134].
	} }}
	
	\author{
%				Sang Jin Ahn\footnote{E-mail: {\tt sjahn920@ajou.ac.kr}\;Department of Financial Engineering, Ajou University, Korea.}
%				\and
		Junkee Jeon \footnote{E-mail: {\tt junkeejeon@khu.ac.kr}\;Department of Applied Mathematics, Kyung Hee University, Korea.} 
%		\and Seung Won Jeong\footnote{E-mail: {\tt junior1492@ajou.ac.kr},  Department of Financial Engineering, Ajou University, Korea.}
		\and
		Hyeng Keun Koo\footnote{E-mail: {\tt hkoo@ajou.ac.kr}\;Department of Financial Engineering, Ajou University, Korea.}
	}
	
	\date{\today}
	
	\maketitle \pagestyle{plain} \pagenumbering{arabic}
	
	\abstract{In this study  we propose a unified model of optimal retirement,  consumption and portfolio choice  of an individual agent, which encompasses a large class of the models in the literature and provide a general methodology to solve the model. Different from the traditional approach, we consider the problems before and after retirement simultaneously and identify the difference in the dual value functions as the utility value of lifetime labor. The utility value has an option nature, namely, it is the maximized value of choosing the retirement time optimally and we discover it by solving a variational inequality. Then, we discover the dual value functions by using the utility value. We discover the value function and optimal policies by establishing a duality between the value function and the dual value function.
		}
	
\vspace{1.0cm}
	
		{\bf JEL Classification Codes}:  D14, D15, E21, G11
	\medskip
	
	{\em Keywords} : Consumption, Human Wealth, Option Value, Portfolio, Retirement, Utility Function  \\

%{\em JEL classification} : E21, G11
%%%%%%%%%%%%%%%%%%%%%%%%%%%%%%%%%%%%%%%%%%%
\newpage

%%%%%%%%%%%%%%%%%%%%%%%%%%%%%%%%%%%%%%%%%%%%%%%%%%%%%%%%%%%%%%%%%%%%%%
\section{Introduction}

In this study, we propose and investigate a general model of optimal retirement,  consumption and portfolio choice  of an individual agent. The optimal  retirement decision is an important determinant of labor supply and human capital. The option of voluntary retirement  makes  the beta of human capital negative, since one can work longer after poor realizations of investment outcomes and has an important effect on risk taking attitudes (\citet{DL2010}). Researchers have developed economic models of the retirement decision in conjunction with the consumption and portfolio choice. In particular, they have shown that the retirement option induces people to increase savings and to make aggressive investments before retirement and to increase risky positions in a stock market boom even after making large profits (\citet{CS2006}, \citet{FP2007},  \citet{CSS2008}, and \citet{YK}).
The models, thus, have provided a profound insight into the interaction between the retiement decision and the life cycle choice of consumption and investment.

The models in the literature, however, are highly specialized and often make contradicting predictions. For example, the models of \citet{FP2007} and \citet{DL2010} predict jumps of consumption at retirement, whereas those of \citet{CS2006} and \citet{YK} do not. The difference in the models originates from the special utility functions they employ, and hence, the predictions are not robust to changes in the utility functions.

Our objective is (i) to propose a unified model, which encompasses a large class of the models in the literature and (ii) to provide a general methodology to solve the model. For the first objective, we propose a model with  minimum assumptions among those employing time separable expected utility functions with a constant subjective discount rate and stationary felicity functions before and after retirement in a single-good economy. The minimum assumptions consist of two;  Assumption \ref{as:utility}   is the condition for the problem to be well-defined, which is much less restrictive than those in the literature,  and  Assumption \ref{as:limit-psi} is  the necessary and sufficient condition  for the retirement option to have a positive value. We next solve the problem by the dual martingale approach.  However, we  do not take the traditional approach of taking sequential steps; solving the problem after retirement in the first step and then solving the problem before retirement in the second step, taking the solution in the second step as an essential ingredient.\footnote{Most of the papers in the literature take this two-step approach. See \citet{YK} for a systematic treatment.} Instead we consider the problems before and after retirement simultaneously and identify the difference in the dual value functions as the utility value of lifetime labor. The utility value has an option nature, namely, it is the maximized value of choosing the retirement time optimally, and can be obtained as a solution to a variational inequality. Then, we find the dual value functions by using the utility value. We discover the value function and optimal policies by establishing a duality between the value function and the dual value function. In this way we show the total wealth of the agent is composed of financial wealth and human wealth and human wealth can be derived as a part of the derivative of utility value of lifetime labor.

As an illustrative example we provide a model which subsumes major models in the literature as special cases.  We obtain optimal policies in closed form when the felicity function has constant relative risk aversion.

\citet{YK} have considered the utility value of lifetime labor and derived its value by solving a variational inequality. Their method is, however, still the  two-step method.  \citet{JP2020} make an observation that  the dual value functions before and after retirement differ by the the utility value of lifetime labor. Their utility functions  before and after retirement differ by an additive disutility, and we consider generally different utility functions in this paper. In the absence of voluntary retirement option  
\citet{BMS1992} and \citet{BDOW2004} study flexible labor supply with an exogenously fixed retirement date.

The paper is organized as follows. Section \ref{sec:model} explains the model. 
Section \ref{sec:optimization} sets up the dual optimization problem and provides a solution. Section \ref{sec:optimal_policies} establishes the duality between the value function and the dual value function and derives the optimal policies. Section \ref{sec:examples} discusses examples and Section \ref{sec:conclusion} concludes.

\section{Model}\label{sec:model}

We consider an agent who lives in a single good economy. The agent is currently working and has an option to retire voluntarily. The agent cannot come  back to work after retirement, and hence,  retirement is an irreversible decision. The agent's preference over consumption is represented by the following expected utility function:
	\begin{equation}\label{eq:utility}
	U\equiv \mathbb{E}\left[\int_0^\tau e^{-\rho t} u_B(c_t)dt+\int_\tau^\infty e^{-\rho t}u_A(c_t)dt\right].
\end{equation}
Here $\rho >0$ is the subjective discount rate, $\tau$ is the time of retirement, $c_t$ is the rate of consumption, and $u_B: {\mathcal D}\rightarrow \mathbb{R}$ and $u_A: {\mathcal D}\rightarrow \mathbb{R}$ (${\mathcal D}= [0,\infty)$ or $(0,\infty)$) are the agent's felicity function of before and after retirement, respectively.\footnote{  We consider an infinite horizon model for simplicity of exposition, and extension to a finite horizon is technically complex, but can be done following the methods in \citet{YK}.
}
We define the class $\Sigma$ of all set of felicity function $u$ satisfying the following conditions:
\begin{itemize}
	\item[(i)] The felicity function $u:{\mathcal D}\to \mathbb{R}$ is strictly increasing, strictly concave and continuously differentiable, and $\lim_{c\to +\infty}u'(c) = 0$. \vspace{2mm}
	
	The strictly decreasing and continuous function $u':(0,\infty) \overset{\textrm{onto}}{\longrightarrow} (0,u'(0))$ has a strictly decreasing, continuous inverse $I_u:(0,u'(0))\overset{\textrm{onto}}{\longrightarrow}(0,\infty)$. We extend $I_u$ by setting $I_u(y)=0$ for $y\ge u'(0)$. Then, we have 
	\begin{align}\label{eq:inverse-extend}
	u'(I_u(y))=\begin{cases}
	y,\;\;\;&0<y<u'(0),\\
	u'(0),\;\;\;&y\ge u'(0),
	\end{cases}
	\end{align}
	and $I_u(u'(c))=c$ for $0<c<\infty$. Note that $\lim_{y\to \infty}I_u(y) = 0$.
	\item[(ii)] 	For any $y>0$, 
	\begin{align}\label{as:well-defined}
	\int_0^y \eta^{-n_2}I_u(\eta)d\eta < \infty.
	\end{align}
	where	$n_1>0$ and $n_2<0$ are two roots of the quadratic equation:
	\begin{equation}\label{eq:quadratic}
	\dfrac{\t^2}{2}n^2 + \left(\rho-r-\frac{\t^2}{2}\right)n-\rho=0.\;\;\;%\mbox{with}\;\;\t:=\dfrac{\mu-r}{\s}.
	\end{equation}
\end{itemize}

To guarantee that the problem is well-defined, we make the following assumption:
\begin{as}\label{as:utility}
	\item[(i)] $u_B,u_A \in \Sigma$, 
	\item[(ii)] for all $y>0$, the following inequality holds:
	\begin{equation*}
	u_B(I_{u_B}(y))<u_A(I_{u_A}(y)).
	\end{equation*}

\end{as}

There are two assets in the financial market, a risk-free asset and a risky asset. The return on the risk-free asset is a constant $r>0$. The cum-dividend price $S_t$ of the risky asset follows the dynamics:
\be\label{eq:dynamics_risky}
\dfrac{dS_t}{S_t} = \mu dt+ \s dB_t,
\ee
where $\mu>0$ and $\sigma$ are constants, describing the mean and the standard deviation of the returns on the risky asset, and $B_t$ is a standard Brownian motion defined on a filtered probability space $(\Omega,{\mathcal F}, P)$.  We assume that  filtration ${\mathcal F}=({\mathcal F}_t)_{t\geq 0}$ is the augmented filtration generated by $B_t$; it describes the information available to the agent at each instance. The two asset model is a simplification of that of  $n$-risky assets and a single risk-free asset with constant covariance matrix of risky asset returns, where the capital asset pricing model(CAPM) and the two-fund separation theorem are valid (\citet{GL}.

We assume that the agent receives constant wage income equal to $\e>0$ until retirement. The constant wage rate assumption is standard in literature.\footnote{For example, \citet{FP2007} states, ``we maintain the assumption that agents receive a constant wage.
	This is done not only for simplicity, but more importantly because it makes the results
	more surprising." (p.91)} Let $\pi_t$ denote the agent's investment in the risky asset at time $t$. Then, 
her wealth $X_t$ at time $t$ satisfies the dynamics:
	\begin{equation}\label{eq:wealth_dynamics}
	dX_t = [r X_t + (\m-r)\pi_t + \e {\bf 1}_{\{t<\tau\}} - c_t]dt + \s \pi_t dB_t \;\;\mbox{with}\;\;X_0=x,
\end{equation}
where $x$ is her initial wealth {and ${\bf 1}_A$ denotes the indicator function of set $A$.  The natural limit for the wealth  is  the following:
\be\label{eq:credit_constraint}
X_t \geq -\frac{\e}{r} {\bf 1}_{\{t< \tau\}},
\ee
that is the agent cannot borrow more than the maximum possible present value of labor income.
}

 {We give technical admissibility conditions for {$(c,\pi,\tau)$}.  Throughout the paper, $(c,\pi,\tau)$ belongs to the admissible class ${\cal A}(x)$ if they are ${\cal F}_t$-progressively measurable processes satisfying the following conditions:
\begin{itemize}
	\item[(a)] $\tau$ belongs to ${\cal S}$,  the set of all ${\cal F}$-stopping times taking values in $(0,\infty)$,
	\item[(b)] $(c,\pi,\tau)$ satisfies the agent's wealth dynamics in \eqref{eq:wealth_dynamics} together with  constraint \eqref{eq:credit_constraint},
	\item[(c)] $c_t$ and  $\pi_t$ satisfy 
	\begin{equation}\label{con:feasible}
	\int_0^t c_s ds <\infty, \;\;\mbox{a.s}\;\;\mbox{and}\;\;\int_0^t \pi_s^2 ds <\infty\;\;\mbox{a.s},\;\;\;\forall\;t\ge 0.
\end{equation}
\end{itemize}
}

We now state the agent's problem:
\begin{pr}\label{pr:optimization_problem} Given $x>-\frac{\e}{r}$, we consider the following optimization problem:
\begin{equation*}
V(x) = \sup_{(c,\pi,\tau)\in{\cal A}(x)} \mathbb{E}\left[\int_0^\tau e^{-\rho t} u_B(t) dt+\int_\tau^\infty e^{-\rho t}u_A(c_t)dt\right]. 
\end{equation*}
\end{pr}

\section{Optimization}\label{sec:optimization}

We derive a solution to the agent's optimization problem by the standard dual martingale approach developed by \citet{KLS} and \citet{CoxH}. They show that the stochastic discount factor of the economy takes the form:
\be\label{stochastic_discount_factor}
{\xi}_t = e^{-rt-\half \theta ^2 t -\theta B_t}, \quad \theta \equiv \frac{\mu -r}{\s}.
\ee
Note that the constant $\theta$ is equal to the Sharpe ratio of the risky asset.  

%\begin{eqnarray}
%	\begin{split}
%		{\mathbb{E}\left[\int_0^\infty {\xi}_t (c_t -\e{\bf 1}_{\{t<\tau\}})dt \right]}\le x.
%	\end{split}

%\end{eqnarray}

In  
the following proposition, we  transform the wealth dynamics \eqref{eq:wealth_dynamics} into a budget constraint in static form. 

\begin{pro}\label{pro:budget-equaltiy}
	Let $x>-\frac{\e}{r}.$ Suppose $(c,\pi,\tau)\in {\cal A}(x).$ Then  $c=(c_t)_{t=0}^\infty$  satisfies the following static budget constraint 
	\begin{equation}\label{static_budget_constraint}
	\mathbb{E}\left[\int_0^\infty \xi\left(c_t-\e {\bf 1}_{\{t<\tau\}}\right)dt \right] =x.
	\end{equation}
Conversely, suppose that  $c=(c_t)_{t=0}^\infty$ satisfies the first condition in \eqref{con:feasible} and $\tau\in{\cal S}$ such that \eqref{static_budget_constraint} is satisfied. 	Then there exists a portfolio process $\pi_t$ such that $(c,\pi,\tau)\in {\cal A}(x)$. The corresponding wealth process $X^{x,c,\pi}$ is 
	\begin{equation*}
	dX_t^{x,c,\pi} = [r X_t^{x,c,\pi} +(\m-r)\pi_t -c_t +\e{\bf 1}_{\{t<\tau\}}]dt +\s \pi_t dB_t \;\;\;t\ge 0
	\end{equation*}
	and 
	\begin{equation}
	X_t^{x,c,\pi} =\mathbb{E}_t\left[\int_t^\infty \dfrac{ \xi_s}{ \xi_t} (c_s-\e 1_{\{s<\tau \}})ds\right],
	\end{equation}
	where $\mathbb{E}_t[\cdot]=\mathbb{E}\left[\cdot \mid {\cal F}_t\right]$ is the conditional expectation at time $t$ on the filtration ${\cal F}_t$. 
\end{pro}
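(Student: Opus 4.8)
The plan is to run the classical dual--martingale argument of \citet{CoxH} and \citet{KLS}: Itô's product rule applied to the discounted wealth $\xi_t X_t$ handles the forward implication, while the martingale representation theorem handles the converse. For the forward implication I would first read off $d\xi_t = -\xi_t(r\,dt + \theta\,dB_t)$ from \eqref{stochastic_discount_factor} and apply the product rule to $\xi_t X_t$ using \eqref{eq:wealth_dynamics}. The crucial simplification is that the portfolio enters the drift only through $(\mu-r)\pi_t - \theta\sigma\pi_t$, which vanishes because $\theta\sigma = \mu-r$; only the labour and consumption terms survive, giving
\[ d(\xi_t X_t) = \xi_t\big(\epsilon\mathbf{1}_{\{t<\tau\}} - c_t\big)\,dt + \xi_t(\sigma\pi_t - \theta X_t)\,dB_t. \]
Integrating from $0$ to $T$ and rearranging yields
\[ \xi_T X_T + \int_0^T \xi_t\big(c_t - \epsilon\mathbf{1}_{\{t<\tau\}}\big)\,dt = x + \int_0^T \xi_t(\sigma\pi_t - \theta X_t)\,dB_t, \]
so the left-hand side is a local martingale started at $x$.

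To control this object I would pass to the shifted wealth $\tilde X_t := X_t + \frac{\epsilon}{r}\mathbf{1}_{\{t<\tau\}}$, which is nonnegative by the credit constraint \eqref{eq:credit_constraint}; the same computation shows that $N_t := \xi_t\tilde X_t + \int_0^t \xi_s c_s\,ds$ is a nonnegative local martingale, hence a supermartingale. Taking expectations and letting $T\to\infty$ then produces the static budget relation \eqref{static_budget_constraint}, provided one can also justify $\lim_{T\to\infty}\mathbb{E}[\xi_T\tilde X_T]=0$ and the true-martingale (not merely local) property of the stochastic integral.

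For the converse I would construct the wealth directly. Since \eqref{static_budget_constraint} holds and the variable $\int_0^\infty \xi_s(c_s - \epsilon\mathbf{1}_{\{s<\tau\}})\,ds$ is integrable, the process $M_t := \mathbb{E}_t\big[\int_0^\infty \xi_s(c_s - \epsilon\mathbf{1}_{\{s<\tau\}})\,ds\big]$ is a uniformly integrable martingale with $M_0 = x$. Because $\mathcal F$ is Brownian, martingale representation gives $dM_t = \phi_t\,dB_t$, and setting $X_t := \xi_t^{-1}\big(M_t - \int_0^t \xi_s(c_s - \epsilon\mathbf{1}_{\{s<\tau\}})\,ds\big)$ reproduces the stated conditional-expectation formula. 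Applying Itô to this product and matching the $dB_t$-coefficient against the identity above forces $\pi_t = \sigma^{-1}(\phi_t/\xi_t + \theta X_t)$, which in turn recovers the dynamics \eqref{eq:wealth_dynamics}. It then remains to check admissibility: the credit constraint follows from $X_t \ge -\epsilon\,\mathbb{E}_t\big[\int_t^\infty (\xi_s/\xi_t)\mathbf{1}_{\{s<\tau\}}\,ds\big] \ge -\tfrac{\epsilon}{r}\mathbf{1}_{\{t<\tau\}}$, using $\mathbb{E}_t[\xi_s/\xi_t]=e^{-r(s-t)}$, while the square-integrability in \eqref{con:feasible} is read off from $\phi$ and the local behaviour of $X$.

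I expect the main obstacle to lie in the forward direction, namely upgrading the supermartingale inequality to the asserted equality: the transversality control $\mathbb{E}[\xi_T\tilde X_T]\to 0$ together with the genuine martingale property of $\int_0^{\cdot} \xi_s(\sigma\pi_s - \theta X_s)\,dB_s$ is precisely what rules out ``over-accumulating'' strategies that would otherwise make the budget strict, and establishing it must lean on the admissibility conditions \eqref{con:feasible} and the finiteness built into the problem. In the converse the analogous but milder difficulty is confirming the integrability needed to invoke martingale representation and the square-integrability of the recovered $\pi$.
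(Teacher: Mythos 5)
Your proposal is correct and follows essentially the same route as the paper: the forward direction is the same discounted-wealth supermartingale argument (you work under $\mathbb{P}$ with the state-price density $\xi_t$, the paper under the martingale measure $\mathbb{Q}$ with $e^{-rt}$ and $N_t=\int_0^t e^{-rs}(c_s-\e{\bf 1}_{\{s<\tau\}})ds+e^{-rt}X_t$, which is the same computation after Girsanov), and your converse via martingale representation is precisely the Karatzas--Shreve Theorem 9.4 argument that the paper cites without reproducing. Two small remarks: the obstacle you flag---upgrading the supermartingale inequality $\mathbb{E}\left[\int_0^\infty \xi_t(c_t-\e{\bf 1}_{\{t<\tau\}})dt\right]\le x$ to the asserted equality---is genuine and is not actually closed in the paper's own proof either, which derives only the inequality before asserting the constraint; and your $N_t$ built from $\tilde X_t=X_t+\frac{\e}{r}{\bf 1}_{\{t<\tau\}}$ jumps downward at $\tau$, so it is a nonnegative supermartingale rather than a local martingale (the paper's choice avoids the jump by keeping the income term inside the integral), though this does not affect the conclusion.
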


\subsection{Properties of the class $\Sigma$}

For $y>0$, the {conjugate function} $\tilde{u}$ of $u\in \Sigma$ is defined as 
\begin{equation*}
\tilde{u}(y) = \sup_{c\ge 0}\left(u(c)-yc\right) = u(I_u(y))- yI_u(y).
\end{equation*}

\begin{lem}\label{lem:finite}
	For the felicity function $u\in \Sigma$ , the following integrability conditions hold:
	\begin{equation*}
	\int_0^y \nu^{-n_2-1} |u(I_u(\nu))|d\nu +\int_y^\infty \nu^{-n_1-1}|u(I_u(\nu))|d\nu <\infty. 
	\end{equation*}
	and
	\begin{equation*}
	\int_0^y \nu^{-n_2-1} |\tilde{u}(\nu)|d\nu +\int_y^\infty \nu^{-n_1-1}|\tilde{u}(\nu)|d\nu <\infty. 
	\end{equation*}
\end{lem}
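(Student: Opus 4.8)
The plan is to reduce everything to the single hypothesis \eqref{as:well-defined} together with one elementary observation about the roots of \eqref{eq:quadratic}. Two envelope identities drive the argument. First, since for $0<y<u'(0)$ we have $u'(I_u(y))=y$, differentiating $\tilde u(y)=u(I_u(y))-yI_u(y)$ gives $\tilde u'(y)=-I_u(y)$ (and $\tilde u$ is constant, $I_u\equiv 0$, once $y\ge u'(0)$), so the fundamental theorem of calculus yields the representations
\[
\tilde u(\nu)=\tilde u(y)+\int_\nu^y I_u(\eta)\,d\eta\quad(0<\nu<y),\qquad \tilde u(\nu)=\tilde u(y)-\int_y^\nu I_u(\eta)\,d\eta\quad(\nu>y).
\]
Second, from the definition of $\tilde u$, $u(I_u(\nu))=\tilde u(\nu)+\nu I_u(\nu)$. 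The key observation is that $n_1>1$: plugging $n=1$ into the left side of \eqref{eq:quadratic} gives exactly $-r<0$, and since the parabola opens upward with $n_2<0<n_1$, the value $1$ must lie strictly between the roots, i.e. $n_2<1<n_1$. This is what makes the tail integrals (weighted by $\nu^{-n_1-1}$) converge without any further assumption.

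For the behaviour near $0$ I would bound $|\tilde u(\nu)|\le|\tilde u(y)|+\int_\nu^y I_u(\eta)\,d\eta$ and integrate against $\nu^{-n_2-1}$. The constant term contributes $|\tilde u(y)|\int_0^y\nu^{-n_2-1}d\nu=|\tilde u(y)|\,y^{-n_2}/(-n_2)<\infty$ because $-n_2>0$. For the remaining term, since all integrands are nonnegative Tonelli lets me swap the order of integration over $\{0<\nu<\eta<y\}$ to get
\[
\int_0^y\nu^{-n_2-1}\!\left(\int_\nu^y I_u(\eta)\,d\eta\right)d\nu=\frac{1}{-n_2}\int_0^y\eta^{-n_2}I_u(\eta)\,d\eta<\infty
\]
by \eqref{as:well-defined}. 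This proves the near-$0$ half of the $\tilde u$ estimate. The near-$0$ half of the $u(I_u(\cdot))$ estimate then follows from $u(I_u(\nu))=\tilde u(\nu)+\nu I_u(\nu)$, because the extra piece $\int_0^y\nu^{-n_2-1}\cdot\nu I_u(\nu)\,d\nu=\int_0^y\eta^{-n_2}I_u(\eta)\,d\eta$ is again exactly the quantity controlled by \eqref{as:well-defined}.

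For the tails I would run the mirror-image computation with weight $\nu^{-n_1-1}$, using $|\tilde u(\nu)|\le|\tilde u(y)|+\int_y^\nu I_u(\eta)\,d\eta$ and Tonelli over $\{y<\eta<\nu\}$ to reduce to $\tfrac{1}{n_1}\int_y^\infty\eta^{-n_1}I_u(\eta)\,d\eta$; here no hypothesis beyond monotonicity of $I_u$ is needed, since $I_u(\eta)\le I_u(y)$ for $\eta\ge y$ gives $\int_y^\infty\eta^{-n_1}I_u(\eta)\,d\eta\le I_u(y)\,y^{1-n_1}/(n_1-1)<\infty$ precisely because $n_1>1$. The same $n_1>1$ bound disposes of the extra term $\int_y^\infty\nu^{-n_1}I_u(\nu)\,d\nu$ coming from $u(I_u(\nu))=\tilde u(\nu)+\nu I_u(\nu)$, completing both displayed inequalities. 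The main obstacle — and the only genuinely nonobvious step — is recognizing that the tail weight $n_1$ is automatically $>1$; once that is in hand, the proof is purely a matter of the two envelope identities and a Tonelli interchange, with the delicate near-$0$ side deferred entirely to hypothesis \eqref{as:well-defined}. Minor care is needed in the case $u'(0)<\infty$, where $I_u$ and $\tilde u$ are eventually constant, but that case only makes the tail estimates easier.
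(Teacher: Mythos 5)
Your proof is correct and follows essentially the same route as the paper's: both arguments rest on the envelope identity $\tilde u'=-I_u$, the relation $u(I_u(\nu))=\tilde u(\nu)+\nu I_u(\nu)$, and a Fubini/Tonelli interchange reducing everything to the hypothesis $\int_0^y\eta^{-n_2}I_u(\eta)\,d\eta<\infty$ together with the tail bound from $I_u$ decreasing. The only differences are cosmetic — you prove the $\tilde u$ estimate first and deduce the $u\circ I_u$ one, while the paper goes the other way, and you make explicit the fact $n_1>1$ that the paper leaves as ``easy to check.''
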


\begin{proof}
	
	It is easy to check that for any $u\in \Sigma$ 
	\begin{equation*}
	\int_0^y \nu^{-n_2}I_{u}(\nu)d\nu +\int_y^\infty \nu^{-n_1}I_u(\nu)d\nu <\infty. 
	\end{equation*}

	Note that for any $y>0$ 
\begin{equation*}
u(I_u(y))-yI_u(y)=\tilde{u}(y)\;\;\mbox{and}\;\;\tilde{u}'(y)=-I_u(y),
\end{equation*}
and thus
\begin{equation*}
u(I_u(\nu))=u(I_u(y))-yI_u(y)+\nu I_u(\nu)+\int_{\nu}^y I_u(\eta)d\eta. 
\end{equation*}

	It follows that 
	\begin{footnotesize}
		\begin{eqnarray}
		\begin{split}\label{eq:u-integral}
		&\int_0^y \nu^{-n_2-1}|u(I_{u}(\nu))|d\nu +\int_y^\infty \nu^{-n_1-1}|u(I_{u}(\nu))|d\nu\\
		\le-&\dfrac{1}{n_2}y^{-n_2}|u(I_{u}(y))-yI_{u}(y)|+\int_0^y \nu^{-n_2}I_{u}(\nu)d\nu +\int_0^y \int_\nu^y \nu^{-n_2-1}I_{u}(\eta)d\eta d\nu\\ +&\dfrac{1}{n_1}y^{-n_1}|u(I_{u}(y))-yI_{u}(y)|+\int_y^\infty \nu^{-n_1}I_{u}(\nu)d\nu +\int_y^\infty \int_y^\nu \nu^{-n_1-1}I_{u}(\eta)d\eta d\nu\\
		=-&\dfrac{1}{n_2}y^{-n_2}|u(I_{u}(y))-yI_{u}(y)|+\dfrac{1}{n_1}y^{-n_1}|u(I_{u}(y))-yI_{u}(y)|\\&+\left(1+\dfrac{1}{n_1}\right)\int_y^\infty \nu^{-n_1}I_{u}(\nu)d\nu +\left(1-\dfrac{1}{n_2}\right)\int_0^y \nu^{-n_2}I_{u}(\nu)d\nu <\infty. 
		\end{split}
		\end{eqnarray}
	\end{footnotesize}
	where we have used Fubini's theorem in last equality.
	
	Since $u(I_u(\nu))=\tilde{u}(\nu)+\nu I_{u}(\nu)$, we have 
	\begin{equation}\label{eq:utilde-integral}
	\int_0^y \nu^{-n_2-1}|\tilde{u}(\nu)|d\nu +\int_y^\infty \nu^{-n_1-1}|\tilde{u}(\nu)|d\nu<\infty.
	\end{equation}
\end{proof}

For any measurable function $f:\mathbb{R}^+\to \mathbb{R}$, we define two operators $\Xi$ and $\Gamma$ by 
\begin{equation*}
\Xi_f(y)=\dfrac{2}{\t^2(n_1-n_2)}\left[y^{n_2}\int_0^y \nu^{-n_2-1}f(\nu)d\nu +y^{n_1}\int_y^\infty \nu^{-n_1-1}f(\nu)d\nu\right]
\end{equation*}
and
\begin{equation*}
\Gamma_f(y)=\dfrac{2}{\t^2(n_1-n_2)}\left[y^{n_2-1}\int_0^y \nu^{-n_2}f(\nu)d\nu +y^{n_1-1}\int_y^\infty \nu^{-n_1}f(\nu)d\nu\right].
\end{equation*}

For $u\in \Sigma$, $\Gamma_{I_u},\;\Xi_{u\circ I_u}$ and $\Xi_{\tilde{u}}$ are given by
\begin{align*}
\Gamma_{I_u}(y)=&\dfrac{2}{\t^2(n_1-n_2)}\left[y^{n_2-1}\int_0^y \nu^{-n_2}I_u(\nu)d\nu +y^{n_1-1}\int_y^\infty \nu^{-n_1}I_u(\nu)d\nu\right],\\
\Xi_{u\circ I_u}(y)=&\dfrac{2}{\t^2(n_1-n_2)}\left[y^{n_2}\int_0^y \nu^{-n_2-1}u(I_u(\nu))d\nu +y^{n_1}\int_y^\infty \nu^{-n_1-1}u(I_u(\nu))d\nu\right]
\end{align*}
and 
\begin{equation*}
\Xi_{\tilde{u}} (y)=\dfrac{2}{\t^2(n_1-n_2)}\left[y^{n_2}\int_0^y \nu^{-n_2-1}\tilde{u}(\nu)d\nu +y^{n_1}\int_y^\infty \nu^{-n_1-1}\tilde{u}(\nu)d\nu\right],
\end{equation*}
respectively. 

By Lemma \ref{lem:finite} and Proposition \ref{pro:review:KMZ}, we can directly obtain the following proposition:
\begin{pro} For any $u\in \Sigma$, $\Gamma_{I_u},\;\Xi_{u\circ I_u}$ and $\Xi_{\tilde{u}}$ are well-defined and the following equalities hold:
	\begin{equation*}
	\Gamma_{I_u}(y)=\dfrac{1}{y}\mathbb{E}\left[\int_0^\infty e^{-\rho t}{\cal Y}_t^y I_u({\cal Y}_t^y)dt\right],\;\;	\Xi_{u\circ I_u}(y)=\mathbb{E}\left[\int_0^\infty e^{-\rho t}u(I_u({\cal Y}_t^y))dt\right]
	\end{equation*}
	and
	\begin{equation*}
	\Xi_{\tilde{u}}(y)=\mathbb{E}\left[\int_0^\infty e^{-\rho t}\tilde{u}({\cal Y}_t^y)dt\right],
	\end{equation*}
	where ${\cal Y}_t^y \equiv y e^{\rho t}\xi_t$. 
\end{pro}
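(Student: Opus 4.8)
The plan is to read all three identities off a single resolvent representation and reduce the work entirely to the integrability already secured in Lemma \ref{lem:finite}. The starting point is Proposition \ref{pro:review:KMZ}, which represents the operator $\Xi_f$ as a discounted occupation integral along $\mathcal{Y}_t^y = y e^{\rho t}\xi_t$: for any measurable $f$ satisfying
\begin{equation*}
\int_0^y \nu^{-n_2-1}|f(\nu)|d\nu + \int_y^\infty \nu^{-n_1-1}|f(\nu)|d\nu < \infty \quad\text{for all } y>0,
\end{equation*}
the operator $\Xi_f$ is well-defined and $\Xi_f(y) = \mathbb{E}\big[\int_0^\infty e^{-\rho t}f(\mathcal{Y}_t^y)dt\big]$. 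This is in essence a Feynman--Kac identity: the kernel $\frac{2}{\t^2(n_1-n_2)}\big(y^{n_2}\nu^{-n_2-1}\mathbf{1}_{\{\nu<y\}} + y^{n_1}\nu^{-n_1-1}\mathbf{1}_{\{\nu>y\}}\big)$ is the Green's function of $\rho-\mathcal{L}$, where $\mathcal{L}g = (\rho-r)yg' + \tfrac{1}{2}\t^2 y^2 g''$ generates $\mathcal{Y}^y$ and $y^{n_1},y^{n_2}$ are its $\rho$-harmonic monomials, $n_1,n_2$ being exactly the roots of \eqref{eq:quadratic}.

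Granting this representation, the two $\Xi$-formulas are immediate upon taking $f = u\circ I_u$ and $f = \tilde{u}$. Their well-definedness is precisely the content of Lemma \ref{lem:finite}: the first displayed inequality there certifies the integrability hypothesis for $f = u\circ I_u$, and the second for $f = \tilde{u}$. Proposition \ref{pro:review:KMZ} then applies verbatim and yields $\Xi_{u\circ I_u}(y) = \mathbb{E}\big[\int_0^\infty e^{-\rho t}u(I_u(\mathcal{Y}_t^y))dt\big]$ and $\Xi_{\tilde{u}}(y) = \mathbb{E}\big[\int_0^\infty e^{-\rho t}\tilde{u}(\mathcal{Y}_t^y)dt\big]$, with no further estimate required.

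For the $\Gamma$-identity I would first record the purely algebraic relation $\Gamma_f(y) = \frac{1}{y}\Xi_{g}(y)$ with $g(\nu) = \nu f(\nu)$, which follows by inspection: replacing $f(\nu)$ by $\nu f(\nu)$ inside $\Xi$ converts the weights $\nu^{-n_2-1},\nu^{-n_1-1}$ into $\nu^{-n_2},\nu^{-n_1}$ and lowers each power of $y$ by one, i.e. pulls a factor $1/y$ out front. Taking $f = I_u$, the integrability hypothesis for $g(\nu)=\nu I_u(\nu)$ reads $\int_0^y \nu^{-n_2}I_u(\nu)d\nu + \int_y^\infty \nu^{-n_1}I_u(\nu)d\nu < \infty$, which is exactly the preliminary bound established at the outset of the proof of Lemma \ref{lem:finite}. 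Hence Proposition \ref{pro:review:KMZ} gives $\Xi_g(y) = \mathbb{E}\big[\int_0^\infty e^{-\rho t}\mathcal{Y}_t^y I_u(\mathcal{Y}_t^y)dt\big]$, and dividing by $y$ produces the claimed $\Gamma_{I_u}(y) = \frac{1}{y}\mathbb{E}\big[\int_0^\infty e^{-\rho t}\mathcal{Y}_t^y I_u(\mathcal{Y}_t^y)dt\big]$.

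The genuine points of care lie not in this final assembly but in the two cited inputs. The main obstacle is internal to Proposition \ref{pro:review:KMZ}: one must know that the Green's-function solution of $(\rho-\mathcal{L})G = f$ coincides with the discounted occupation integral under precisely the integrability hypotheses above, with the growth of $\mathcal{Y}^y$ ruling out any boundary contribution at $0$ or $\infty$ (a transversality check in the Feynman--Kac argument). Once that representation is available, the present proposition is mechanical: match each target function to the verified bound of Lemma \ref{lem:finite}, and invoke the one-line identity $\Gamma_f = \frac{1}{y}\Xi_{\nu f}$ for the remaining case.
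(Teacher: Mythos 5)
Your proposal is correct and follows exactly the route the paper intends: the paper gives no separate proof, stating only that the result follows ``by Lemma \ref{lem:finite} and Proposition \ref{pro:review:KMZ},'' and your argument is precisely the elaboration of that citation, including the key observation $\Gamma_f(y)=\tfrac{1}{y}\Xi_{g}(y)$ with $g(\nu)=\nu f(\nu)$, whose integrability hypothesis is supplied by condition \eqref{as:well-defined} together with the preliminary bound in the proof of Lemma \ref{lem:finite}.
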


\begin{lem}\label{lem:Xi-Gamma}
	For any $u\in \Xi$, the following statements are true:
	\begin{itemize}
		\item[(a)] $\Xi_{\tilde{u}}'(y) = -\Gamma_{I_u}(y)$. 
		\item[(b)] $\Gamma_{I_u}(y)$ is strictly decreasing in $y>0$. Thus, $\Xi_{\tilde{u}}(y)$ is strictly convex in $y>0$. 
		\item[(c)] $\lim_{y\to \infty} \Gamma_{I_u}(y)=0$ and $\lim_{y\to 0+}\Gamma_{I_u}(y)=\infty$. 
		
		\item[(d)] $\lim_{t\to\infty} e^{-\rho t} \mathbb{E}\left[{\cal Y}_t^y \Gamma_{I_u}({\cal Y}_t^{y^*})\right]=0$. 
		
		\item[(e)] There exist positive constants $\zeta_1$ and $\zeta_2$ such that 
		\begin{equation*}
		|y\Gamma_{I_u}'(y)| \le \zeta_1 (y^{\zeta_2}+y^{-\zeta_2}). 
		\end{equation*}
	\end{itemize}
\end{lem}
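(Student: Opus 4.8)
The plan is to reduce parts (a)--(c) to the probabilistic representations of the preceding proposition and to derive (e) from the explicit integral formulas, and then to obtain the transversality statement (d) from these. Throughout I would write ${\cal Y}_t^y=y\,m_t$ with $m_t\equiv e^{\rho t}\xi_t>0$, a process independent of $y$, so that the entire $y$-dependence is a rescaling of the argument of $I_u$ or $\tilde u$; this makes monotonicity and differentiation in $y$ transparent. I also record the fact I will use repeatedly: since $n_2$ solves \eqref{eq:quadratic}, $y^{n_2}$ lies in the kernel of the discounted generator of ${\cal Y}^y$, so $e^{-\rho t}m_t^{n_2}$ is a positive $P$-martingale with expectation $1$.

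For (a) I would differentiate the explicit expression for $\Xi_{\tilde u}$ termwise by Leibniz's rule. The two inner-endpoint contributions at $\nu=y$ cancel, leaving
\begin{equation*}
\Xi_{\tilde u}'(y)=\frac{2}{\t^2(n_1-n_2)}\left[n_2\,y^{n_2-1}\int_0^y \nu^{-n_2-1}\tilde u(\nu)\,d\nu + n_1\,y^{n_1-1}\int_y^\infty \nu^{-n_1-1}\tilde u(\nu)\,d\nu\right].
\end{equation*}
Integrating each integral by parts and using $\tilde u'=-I_u$ turns this into exactly $-\Gamma_{I_u}(y)$, provided the boundary terms $\nu^{-n_2}\tilde u(\nu)|_{\nu\to0}$ and $\nu^{-n_1}\tilde u(\nu)|_{\nu\to\infty}$ vanish. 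These limits vanish by combining the integrability of Lemma \ref{lem:finite} with the monotonicity of $\tilde u$: a nonzero limit would force a divergent dyadic sum inside the finite integrals $\int_0^y\nu^{-n_2-1}|\tilde u|\,d\nu$ and $\int_y^\infty\nu^{-n_1-1}|\tilde u|\,d\nu$. Conceptually this is just differentiation under the expectation in $\Xi_{\tilde u}(y)=\mathbb{E}[\int_0^\infty e^{-\rho t}\tilde u(y m_t)\,dt]$ together with $\tilde u'=-I_u$.

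Parts (b), (c) and (e) are the more routine ones. For (b) and (c) I use $\Gamma_{I_u}(y)=\mathbb{E}[\int_0^\infty e^{-\rho t}m_t\,I_u(y m_t)\,dt]$: as $I_u$ is strictly decreasing and positive, $y\mapsto m_t I_u(ym_t)$ is pathwise strictly decreasing, whence $\Gamma_{I_u}$ is strictly decreasing, and with (a) the function $\Xi_{\tilde u}'=-\Gamma_{I_u}$ is strictly increasing, giving strict convexity; the limits in (c) follow by dominated convergence as $y\to\infty$ (with dominating function $m_t I_u(m_t)$, whose integral is $\Gamma_{I_u}(1)$) and monotone convergence as $y\to0^+$. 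For (e) I again cancel boundary terms, obtaining
\begin{equation*}
y\Gamma_{I_u}'(y)=\frac{2}{\t^2(n_1-n_2)}\left[(n_2-1)y^{n_2-1}\int_0^y\nu^{-n_2}I_u\,d\nu+(n_1-1)y^{n_1-1}\int_y^\infty\nu^{-n_1}I_u\,d\nu\right],
\end{equation*}
with $n_2<1<n_1$ since \eqref{eq:quadratic} is negative at $n=1$. The single estimate needed is that $\Sigma$-integrability near $0$ forces a power bound: by monotonicity $\int_0^y\nu^{-n_2}I_u\,d\nu\ge I_u(y)\,y^{1-n_2}/(1-n_2)$, so $I_u(y)\le C\,y^{\,n_2-1}$ for small $y$, while $I_u$ is bounded away from $0$; estimating both bracketed integrals with this (and $\int_y^\infty\nu^{-n_1}I_u\le I_u(y)\,y^{1-n_1}/(n_1-1)$) yields $|y\Gamma_{I_u}'(y)|\le \zeta_1(y^{\zeta_2}+y^{-\zeta_2})$ with $\zeta_2=1-n_2$.

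The hard part is (d), where the naive bound fails because $\zeta_2=1-n_2$ is exactly critical: integrating (e) gives only $\Gamma_{I_u}(w)\le C(1+w^{\,n_2-1})$, and feeding this into the log-normal moment produces the martingale $e^{-\rho t}m_t^{n_2}$, whose exponential rate is precisely $0$ (again by \eqref{eq:quadratic}). So I would exploit that the bound is not tight: since $\int_0^w\nu^{-n_2}I_u\,d\nu\to0$ as $w\to0$, in fact $\Gamma_{I_u}(w)=o(w^{\,n_2-1})$, i.e. $\Psi(w):=w^{1-n_2}\Gamma_{I_u}(w)\to0$ as $w\to0$. Writing ${\cal Y}_t^y\,\Gamma_{I_u}({\cal Y}_t^{y^*})=y(y^*)^{n_2-1}m_t^{n_2}\Psi(y^*m_t)$ and changing measure by $dQ/dP|_{{\cal F}_t}=e^{-\rho t}m_t^{n_2}$ reduces (d) to $\mathbb{E}^Q[\Psi(y^*m_t)]\to0$. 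Under $Q$ the drift of $\log m_t$ equals $\rho-r+(n_2-\tfrac12)\t^2=\frac{\rho}{n_2}+\frac{\t^2}{2}n_2$, strictly negative since $n_2<0$, so $m_t\to0$ and $\Psi(y^*m_t)\to0$ $Q$-a.s. The genuinely delicate step, which I expect to be the main obstacle, is upgrading this to $L^1(Q)$ convergence: $\Psi$ is unbounded at $+\infty$, so one must control $\mathbb{E}^Q[\Psi(y^*m_t);m_t>\delta]$ by balancing the at-most-polynomial growth of $\Psi$ against the exponentially small $Q(m_t>\delta)$ from the negative drift, tuning $\delta$ with $t$.
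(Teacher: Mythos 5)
Parts (a), (b), (c) and (e) of your proposal are essentially sound and run parallel to (or slightly cleaner than) the paper's own arguments: the paper likewise proves (a) by termwise differentiation plus Riemann--Stieltjes integration by parts using $\liminf_{\nu\downarrow0}\nu^{-n_2}|\tilde u(\nu)|=\liminf_{\nu\uparrow\infty}\nu^{-n_1}|\tilde u(\nu)|=0$, proves (b) by an integration by parts (your pathwise monotonicity argument is an acceptable substitute), proves (c) by monotone/dominated convergence in the probabilistic representation, and proves (e) by invoking Proposition \ref{pro:review:KMZ}(d) twice, for $\Xi_{\tilde u}$ and for $\Xi_f$ with $f(\nu)=\nu I_u(\nu)$, via $|y\Gamma_{I_u}'(y)|\le|(y\Gamma_{I_u}(y))'|+|\Gamma_{I_u}(y)|$; your direct estimate from the explicit formula reaches the same bound.

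The genuine gap is in (d), and it is exactly where you flagged "the main obstacle." Your reduction to $\mathbb{E}^{Q}[\Psi(y^*m_t)]\to0$ with $dQ/dP|_{{\cal F}_t}=e^{-\rho t}m_t^{n_2}$ and $\Psi(w)=w^{1-n_2}\Gamma_{I_u}(w)$ is correct, and $\Psi(y^*m_t)\to0$ $Q$-a.s.\ does hold. But the completion you sketch --- bounding $\mathbb{E}^{Q}[\Psi(y^*m_t);m_t>\delta]$ by "at-most-polynomial growth of $\Psi$ times the exponentially small $Q(m_t>\delta)$" --- cannot work. The only growth bound available from (e)-type estimates is $\Psi(w)\le C(1+w^{n_1-n_2})$, and this exponent is exactly critical at infinity in the same way $1-n_2$ is critical at zero: since $n_1$ also solves \eqref{eq:quadratic}, $\mathbb{E}^{Q}[m_t^{n_1-n_2}]=e^{-\rho t}\mathbb{E}^{P}[m_t^{n_1}]\equiv1$, and this unit mass is carried by the event $\{m_t\ \text{large}\}$: indeed $\mathbb{E}^{Q}[m_t^{n_1-n_2}\mathbf{1}_{\{m_t>\delta\}}]=Q'(m_t>\delta)\to1$, where $dQ'/dP|_{{\cal F}_t}=e^{-\rho t}m_t^{n_1}$ gives $\log m_t$ the \emph{positive} drift $\rho/n_1+\t^2 n_1/2$. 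So no choice of $\delta(t)$ makes that term vanish under the bound you propose; you would additionally need the little-$o$ refinement $\Gamma_{I_u}(w)=o(w^{n_1-1})$ as $w\to\infty$ (which does hold, by the same $\varepsilon$-splitting you used at $0$) and a decomposition $\Psi\le\varepsilon(1+w^{n_1-n_2})+C_\varepsilon\mathbf{1}_{[a_\varepsilon,b_\varepsilon]}$ rather than a tail-probability estimate. The paper avoids all of this: since $y\Gamma_{I_u}(y)=\Xi_f(y)$ with $f(\nu)=\nu I_u(\nu)$ integrable in the sense of Proposition \ref{pro:review:KMZ}, part (e) of that proposition gives $\lim_{t\to\infty}e^{-\rho t}\mathbb{E}[|\Xi_f({\cal Y}_t^y)|]=0$ immediately --- the underlying mechanism being the Markov property, which rewrites $e^{-\rho t}\mathbb{E}[\Xi_f({\cal Y}_t^y)]$ as the tail $\mathbb{E}\left[\int_t^\infty e^{-\rho s}f({\cal Y}_s^y)\,ds\right]$ of an absolutely convergent integral. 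You should either cite that proposition as the paper does or supply the sharpened asymptotics; as written, (d) is not proved.
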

\begin{proof}
	\noindent{\bf (a):} By the definition of $\Xi_{\tilde{u}}(y)$, we have 
	\begin{align*}
	\Xi_{\tilde{u}}'(y)=\dfrac{2}{\t^2(n_1-n_2)}\left[n_2 y^{n_2-1}\int_0^y \nu^{-n_2-1}\tilde{u}(\nu)d\nu +n_1y^{n_1-1}\int_y^\infty \nu^{-n_1-1}\tilde{u}(\nu)d\nu\right]. 
	\end{align*}
	It follows from Lemma \ref{lem:finite} and Proposition \ref{pro:review:KMZ} that 
	\begin{align}\label{eq:limit-u-tilde}
	\liminf_{y\downarrow 0}y^{-n_2}\tilde{u}(y) = \liminf_{y\uparrow \infty} y^{-n_1}\tilde{u}(y)=0. 
	\end{align}
	By applying the integration by parts for Riemann-Stieltjes integral, it follows from the limiting behavior of $\tilde{u}$ in \eqref{eq:limit-u-tilde} that 
	\begin{align*}
	\Xi_{\tilde{u}}'(y)=&\dfrac{2}{\t^2(n_1-n_2)}\left[n_2 y^{n_2-1}\int_0^y \nu^{-n_2-1}\tilde{u}(\nu)d\nu +n_1y^{n_1-1}\int_y^\infty \nu^{-n_1-1}\tilde{u}(\nu)d\nu\right]\\
	=&-\dfrac{2}{\t^2(n_1-n_2)}\left[y^{n_2-1}\int_0^y \nu^{-n_2}I_u(\nu)d\nu +y^{n_1-1}\int_y^\infty \nu^{-n_1}I_u(\nu)d\nu\right]\\
	=&-\Gamma_{I_u}(y),
	\end{align*}
	where we have used the fact $(\tilde{u}(y))'=-I_u(y)$.\\
	
	\noindent{\bf (b):} It follows from $u\in \Sigma$ and Proposition \ref{pro:review:KMZ} that 
	\begin{align}\label{eq:limit-I-u}
	\liminf_{y\downarrow 0}y^{-n_2}I_u(y) = \liminf_{y\uparrow \infty} y^{-n_1}I_u(y)=0. 
	\end{align}
	
	The integration by parts for Riemann-Stieltjes integral implies that 
	\begin{align*}
	\Gamma_{I_u}'(y)=&\dfrac{2}{\t^2(n_1-n_2)}\left[(n_2-1)y^{n_2-2}\int_0^y \nu^{-n_2}I_u(\nu)d\nu +(n_1-1)y^{n_1-2}\int_y^\infty \nu^{-n_1}I_u(\nu)d\nu\right]\\
	=&\dfrac{2}{\t^2(n_1-n_2)}\left[y^{n_2-2}\int_0^y \nu^{1-n_2}I^{\prime}_u(\nu)d\nu +y^{n_1-2}\int_y^\infty \nu^{1-n_1}I^{\prime}_u(\nu)d\nu\right]<0,
	\end{align*}
	where we have used the fact $I_u$ is strictly decreasing in $y>0$.\\
	
	\noindent{\bf (c):} Note that 
	\begin{equation*}
	\Gamma_{I_u}(y)=\dfrac{1}{y}\mathbb{E}\left[\int_0^\infty e^{-\rho t}{\cal Y}_t^y I_u({\cal Y}_t^y)dt\right]=\mathbb{E}\left[\int_0^\infty \xi_t I_y({\cal Y}_t)dt\right]. 
	\end{equation*}
	
	Since $\mathbb{E}\left[\int_0^\infty \xi_t I_u({\cal Y}_t^y)dt \right]<\infty$, the dominated convergence theorem implies that 
	\begin{align*}   
	\lim_{y\downarrow 0}\Gamma_{I_u}(y) =\mathbb{E}\left[\int_0^\infty \lim_{y\downarrow 0} \xi_t I_u(ye^{\rho t}{\xi}_t)dt\right]=\infty
	\end{align*}
	and 
	\begin{align*}
	\lim_{y\uparrow \infty}\Gamma_{I_u}(y) =\mathbb{E}\left[\int_0^\infty \lim_{y\uparrow \infty} \xi_t I_u(ye^{\rho t}{\xi}_t)dt\right]=0.
	\end{align*}
	
	\noindent {\bf Proof of (d):} Note that 
	\begin{equation*}
	y\Gamma_{I_u}(y) = \mathbb{E}\left[\int_0^\infty e^{-\rho t}{\cal Y}_t^y I_u({\cal Y}_t^y)dt\right]<\infty.
	\end{equation*}
	
	Thus, it follows from Proposition \ref{pro:review:KMZ} (e) that 
	$$\lim_{t\to\infty} e^{-\rho t} \mathbb{E}\left[{\cal Y}_t^y \Gamma_{I_u}({\cal Y}_t^{y^*})\right]=0.$$\\
	
	\noindent {\bf Proof of (e):} 
	Since $\Gamma_{I_u}(y) = -\Xi_{\tilde{u}}'(y)$, Proposition \ref{pro:review:KMZ} (d) implies that there exits a constant $C_1>0$ satisfying  
	\begin{equation}\label{eq:Gamma-prime-1}
	|\Gamma_{I_u}(y)| \le C_1 (y^{n_1-1}+y^{n_2-1}). 
	\end{equation}
	
Since $y\Gamma_{I_u}(y) = \mathbb{E}\left[\int_0^\infty e^{-\rho t}{\cal Y}_t^y I_u({\cal Y}_t^y)dt\right]$,  there exists a constant $C_2>0$ such that 
	\begin{equation}\label{eq:Gamma-prime-2}
	|(y\Gamma_{I_u}(y))'| \le C_2 (y^{n_1-1}+y^{n_2-1}).
	\end{equation}
	 
	Thus, we have 
	\begin{align*}
	|y\Gamma_{I_u}'(y)|\le |(y\Gamma_{I_u}(y))'| + |\Gamma_{I_u}(y)| \le (C_1+C_2)(y^{n_1-1}+y^{n_2-1}).
	\end{align*}
	
	This completes the proof. 
	
\end{proof}
%\subsection{The agent's optimization problem after retirement}
%
%We define $\tau = t$ in this subsection, where $t$ is the fixed current time. In this case, the admissible set can be defined as 
%\begin{equation}
%{\cal A}_t^R(X_t)\equiv \{ (c,\pi)\mid (c,\pi,t)\in{\cal A}(X_t)\}.
%\end{equation}
%
%
%For given $X_t=x$,  the agent's optimization problem after retirement is given by 
%\begin{equation}
%V_A(x)\equiv \sup_{(c,\pi)\in{\cal A}_t^R(x)}\mathbb{E}_t\left[\int_t^\infty e^{-\rho(s-t)}u_A(c_s)ds\right].
%\end{equation}
%
%According to Theorem 3.6.11 in \citet{KS}, the following duality relationship holds:
%\begin{equation}
%V_A(x)=\inf_{y>0}\left(J_A(y)+yx\right)\;\;\;\mbox{and}\;\;\;J_A(y)=\sup_{x>0}\left(V_A(x)-yx\right),
%\end{equation}
%where $J_A(y)$ is the dual value function after retirement given by 
%\begin{equation}
%J_A(y) = \mathbb{E}_t\left[\int_t^\infty e^{-\rho(s-t)}\tilde{u}_A({\cal Y}_{t,s}^{y,A})ds\right]\;\;\mbox{with}\;\;\tilde{u}_A(y)=\sup_{c\ge 0}\left(u_A(c)-yc\right),
%\end{equation}
%and ${\cal Y}_{t,s}^{y,A} \equiv y e^{\rho(s-t)}\xi_s/\xi_t$. 
%
%Moreover, the optimal consumption after retirement $(c_s^A)_{s=t}^\infty$ is 
%\begin{equation}
%c_s^A = I_{u_A}({\cal Y}_{t,s}^{y_A,A})\;\;\;\mbox{for}\;\;s\ge t ,
%\end{equation}
%where $y_A>0$ is a unique solution of 
%\begin{equation}
%X_t= - J_A(y). 
%\end{equation}
%

\subsection{Dual formulation}

We  formulate the following Lagrangian: 
\begin{eqnarray}
\begin{split}\label{eq:Lagrangian}
\mathfrak{L}=&\mathbb{E}\left[\int_0^\tau e^{-\rho t} u_B(c_t)dt+\int_\tau^\infty e^{-\rho t}u_A(c_t)dt\right]+y\left(x-\mathbb{E}\left[\int_0^\infty {\xi}_t (c_t -\e{\bf 1}_{\{t<\tau\}})dt \right]\right)\\
%\le&\mathbb{E}\left[\int_0^\tau e^{-\rho t} u_B(c_t)dt+e^{-\rho \tau} V_A(X_\tau)\right]+y\left(x-\mathbb{E}\left[\int_0^\infty {\xi}_t (c_t -\e{\bf 1}_{\{t<\tau\}})dt \right]\right)\\
=&\mathbb{E}\left[\int_0^\tau e^{-\rho t}(u_B(c_t)-{\cal Y}_t^y c_t +{\cal Y}_t^y \e)dt + e^{-\rho \tau}{\mathbb{E}_{\tau}\left[\int_{\tau}^\infty e^{-\rho(t-\tau)}\left(u_A(c_t)-{\cal Y}_t^y c_t\right)dt\right]}\right] +yx \\
\le&\mathbb{E}\left[\int_0^\tau e^{-\rho t}(u_B(c_t)-{\cal Y}_t^y c_t +{\cal Y}_t^y \e)dt + e^{-\rho \tau}{\mathbb{E}_{\tau}\left[\int_{\tau}^\infty e^{-\rho(t-\tau)}\tilde{u}_A({\cal Y}_t^y)dt\right]}\right] +yx \\
\le&\mathbb{E}\left[\int_0^\tau e^{-\rho t}\left(\tilde{u}_B({\cal Y}_t^y )+{\cal Y}_t^y \e\right)dt +\int_\tau^\infty e^{-\rho t}\tilde{u}_A({\cal Y}_t^y)dt\right] +yx\\
=&\mathbb{E}\left[\int_0^\tau e^{-\rho t}\left(\tilde{u}_B({\cal Y}_t^y )-\tilde{u}_A({\cal Y}_t^y)+{\cal Y}_t^y \e\right)dt +\int_0^\infty e^{-\rho t}\tilde{u}_A({\cal Y}_t^y)dt\right] +yx\\
=&J_A(y) + \mathbb{E}\left[\int_0^\tau e^{-\rho t}\left(\tilde{u}_B({\cal Y}_t^y )-\tilde{u}_A({\cal Y}_t^y)+{\cal Y}_t^y \e\right)dt\right] +yx,
\end{split}
\end{eqnarray}
where $y>0$ is a Lagrangian multiplier of the static budget constraint \eqref{static_budget_constraint}, ${\cal Y}_t^y = y e^{\rho t} {\xi}_t$,
\be\label{de:dual_vf_a}
 J_A(y)=\mathbb{E}\left[\int_0^\infty e^{-\rho t}\tilde{u}_A({\cal Y}_t^y)dt\right],
\ee
and $\tilde{u}_B$, $\tilde{u}_A$ are the conjugate function of $u_B$, $u_A$ given by 
	\begin{align*}
	\tilde{u}_B(y)\equiv&\sup_{c\ge 0}\left(u_B(c)-yc\right)=u_B(I_{u_B}(y))-yI_{u_B}(y),\\
	\tilde{u}_A(y)\equiv&\sup_{c\ge 0}\left(u_A(c)-yc\right)=u_A(I_{u_A}(y))-yI_{u_B}(y),
	\end{align*}
respectively. The dual variable ${\cal Y}_t$ is the {\em marginal value} of wealth, as will be shown in Theorem \ref{thm:main}. 

Thus, for $y>0$ the candidate of optimal consumption $(\hat{c}({\cal Y}_t^y))_{t=0}^{\infty}$ is given by 
{\begin{eqnarray}
\begin{split}\label{eq:candi-opti}
\hat{c}({\cal Y}_t^y)  =  
\begin{cases}
I_{u_B}({\cal Y}_t^y)\;\;\;&\mbox{for}\;\;0\le t <\tau,\\
I_{u_A}({\cal Y}_t^y)\;\;\;&\mbox{for}\;\;t\ge \tau.
\end{cases}
\end{split}
\end{eqnarray}}

Considering the Lagrangian \eqref{eq:Lagrangian}, we formulate the dual problem which chooses the optimal time of retirement by observing the changes of the marginal value of wealth ${\cal Y}_t$.
\begin{pr}\label{pr:optimal_stopping}
	\begin{equation}\label{eq:dual_value}
	J(y) = J_A(y) + \sup_{\tau \in {\cal S}} \mathbb{E}\left[\int_0^\tau e^{-\rho t}\left(\tilde{u}_B({\cal Y}_t^y )-\tilde{u}_A({\cal Y}_t^y)+{\cal Y}_t^y \e\right)dt\right],
	\end{equation}
where ${\cal S}$ denotes the set of ${\mathcal F}-$stopping times. We call  $J(y)$ the {\em dual value function}. 
\end{pr}

Problem \ref{pr:optimal_stopping} states that the dual value function consists of $J_A$ and the optimized value of the optimal stopping problem. If $\tau=0$, i.e., the agent retires immediately, then the dual value function is equal to $J_A$, and hence, $J_A$ can be regarded as the {\em dual value function after retirement}. The quantity
\be\label{de:utlity_working}
\tilde{u}_B({\cal Y}_t^y )-\tilde{u}_A({\cal Y}_t^y)+{\cal Y}_t^y \e
\ee
 inside the integral in the optimal stopping problem is composed of two components: the first is the difference in the conjugate felicity functions, 
$\tilde{u}_B({\cal Y}_t^y )-\tilde{u}_A({\cal Y}_t^y),$ which can be interpreted as the difference in the utility values before and after retirement, and the second is ${\cal Y}_t^y \e$, labor income adjusted by the marginal utility of wealth and can be interpreted as the utility value of labor income. Accordingly,  quantity \eqref{de:utlity_working} is the {\em marginal benefit of work} relative to retirement. The agent  chooses the retirement time $\tau$ that maximizes the present value of the marginal benefit. In this sense we will call the optimized value of the optimal stopping problem as the {\em utility value of lifetime labor.}

Let us denote the  utility value of lifetime labor by ${\cal P}(y),$ i.e., 
\begin{equation}\label{eq:OSP}
{\cal P}(y) = \sup_{\tau \in {\cal S}} \mathbb{E}\left[\int_0^\tau e^{-\rho t}\left(\tilde{u}_B({\cal Y}_t^y )-\tilde{u}_A({\cal Y}_t^y)+{\cal Y}_t^y \e\right)dt\right].
\end{equation}
Let us denote the current normalized marginal benefit of work  by $\Psi(y),$ i.e., 
\begin{equation*}
\Psi(y) = \dfrac{1}{y}\left(\tilde{u}_B(y)-\tilde{u}_A(y)\right)+\e. 
\end{equation*}

We will now investigate properties of the normalized marginal benefit.
\begin{lem}\label{lem:psi}
	$\Psi(y)$ is a strictly increasing function of $y>0$ and 
	\begin{equation*}
	\lim_{y\to \infty} \Psi(y) = \e. 
	\end{equation*}
\end{lem}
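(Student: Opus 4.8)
The plan is to handle the two assertions separately: the monotonicity by a short differentiation that invokes Assumption~\ref{as:utility}(ii), and the limit by controlling $\tilde u_B(y)/y$ and $\tilde u_A(y)/y$ individually.

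For monotonicity, write $\Psi(y)=\e+g(y)/y$ with $g\equiv\tilde u_B-\tilde u_A$. Since each conjugate is $C^1$ with $\tilde u'(y)=-I_u(y)$ (as recorded in the proof of Lemma~\ref{lem:finite}) and each $I_u$ is continuous, $g$ is $C^1$ on $(0,\infty)$ and $\Psi'(y)=(yg'(y)-g(y))/y^2$. Substituting $g'(y)=I_{u_A}(y)-I_{u_B}(y)$ together with $g(y)=\bigl(u_B(I_{u_B}(y))-yI_{u_B}(y)\bigr)-\bigl(u_A(I_{u_A}(y))-yI_{u_A}(y)\bigr)$, the four terms carrying the factor $y$ cancel, leaving
\begin{equation*}
yg'(y)-g(y)=u_A(I_{u_A}(y))-u_B(I_{u_B}(y)).
\end{equation*}
By Assumption~\ref{as:utility}(ii) the right-hand side is strictly positive for every $y>0$, so $\Psi'(y)=\bigl(u_A(I_{u_A}(y))-u_B(I_{u_B}(y))\bigr)/y^2>0$ and $\Psi$ is strictly increasing. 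This is precisely where the standing assumption enters in an essential way.

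For the limit it suffices to show $\lim_{y\to\infty}\tilde u(y)/y=0$ for $u\in\{u_B,u_A\}$, since then $g(y)/y\to0$ and $\Psi(y)\to\e$. I would start from $\lim_{y\to\infty}I_u(y)=0$, which is recorded in condition~(i). The subtlety is that $\tilde u(y)$ itself need not stay bounded: for instance in the CRRA case with relative risk aversion exceeding one it tends to $-\infty$, so one cannot simply argue that the numerator is bounded. I would instead integrate $\tilde u'(y)=-I_u(y)$: given $\varepsilon>0$ pick $Y$ with $0\le I_u(\eta)\le\varepsilon$ for $\eta\ge Y$, so that $\tilde u(Y)-\varepsilon(y-Y)\le\tilde u(y)\le\tilde u(Y)$ for $y\ge Y$; dividing by $y$ and letting $y\to\infty$ sandwiches $\tilde u(y)/y$ between $0$ and $-\varepsilon$, and letting $\varepsilon\downarrow0$ yields the claim. (Equivalently, the ``denominator $\to\infty$'' form of l'H\^opital's rule applies directly, since $\tilde u'(y)/1=-I_u(y)\to0$.)

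The monotonicity step is essentially immediate once the cancellation is noticed, so I expect the only delicate point to be the limit: one must establish the sublinear growth of $\tilde u$ without assuming it is bounded, which the integral sandwich above settles cleanly.
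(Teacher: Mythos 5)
Your proof is correct and follows essentially the same route as the paper: the monotonicity comes from the identity $y\Psi'(y^2)\cdots$ — more precisely, from $y g'(y)-g(y)=u_A(I_{u_A}(y))-u_B(I_{u_B}(y))>0$ via Assumption~\ref{as:utility}(ii), exactly as in the paper, and the limit rests on $\tilde u'=-I_u\to 0$. Your treatment of the limit is in fact slightly more careful than the paper's (which splits into cases according to whether $|\tilde u_B-\tilde u_A|$ tends to infinity and applies l'H\^opital to the difference): the per-conjugate integral sandwich avoids that non-exhaustive dichotomy and handles the possibility that $\tilde u(y)$ is unbounded below without extra hypotheses.
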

\begin{proof}
	It follows from Assumption \ref{as:utility} that 
	\begin{align*}
	\Psi'(y)=& -\dfrac{1}{y^2}\left(\tilde{u}_B(y)-\tilde{u}_A(y)\right)+\dfrac{1}{y}(-I_{u_B}(y)+I_{u_A}(y))\nonumber\\
	=&-\dfrac{1}{y^2}\left((\tilde{u}_B(y)+yI_{u_B}(y))-(\tilde{u}_A(y)+yI_{u_A}(y))\right)\\
	=&-\left(u(I_{u_B}(y))-u(I_{u_A}(y))\right)>0.\nonumber
	\end{align*}
	Hence, 	$\Psi(y)$ is a strictly increasing function of $y>0$.
	
	\begin{itemize}
		\item[(i)] If $\lim_{y\to \infty} |\left(\tilde{u}_B(y)-\tilde{u}_A(y)\right)|=\infty$,
		
		it follows form L'Hospital's rule that
		\begin{align*}
		\lim_{y\to \infty}\dfrac{\tilde{u}_B(y)-\tilde{u}_A(y)}{y}=\lim_{y\to\infty}\dfrac{-I_{u_B}(y)+I_{u_A}(y)}{1}=0. 
		\end{align*}
		
		\item[(ii)] If $\lim_{y\to \infty} |\left(\tilde{u}_B(y)-\tilde{u}_A(y)\right)|<\infty$, 
		
		it is clear that 
		\begin{align*}
		\lim_{y\to \infty}\dfrac{\tilde{u}_B(y)-\tilde{u}_A(y)}{y}=0. 
		\end{align*}
	\end{itemize}

By (i) and (ii), we deduce that 
$$
\lim_{y\to \infty}\Psi(y) = \e. 
$$
\end{proof}

If $\lim_{y\to 0+} \Psi(y) \ge 0$, Lemma \ref{lem:psi} implies that $\Psi(y)>0$ for all $y>0$. Thus, we deduce that for any $\tau\in{\cal S}$
\begin{equation*}
\mathbb{E}\left[\int_0^\tau e^{-\rho t}\left(\tilde{u}_B({\cal Y}_t^y )-\tilde{u}_A({\cal Y}_t^y)+{\cal Y}_t^y \e\right)dt\right]\le \mathbb{E}\left[\int_0^\infty e^{-\rho t}\left(\tilde{u}_B({\cal Y}_t^y )-\tilde{u}_A({\cal Y}_t^y)+{\cal Y}_t^y \e\right)dt\right].
\end{equation*}

That is, the agent does not choose the option to retire in the case when $\lim_{y\to 0+} \Psi(y) \ge 0$, that is, the marginal benefit of work is always positive. Consequently, the necessary condition for the agent to retire at a finite time $\tau$ is  the following assumption:
\begin{as}\label{as:limit-psi}
	$$\lim_{y\to0+}\Psi(y) <0.$$
\end{as}

Under Assumption \ref{as:limit-psi}, Lemma \ref{lem:psi} implies that there exists a unique $\bar{z}>0$ such that 
$$
\Psi(\bar{z})=0. 
$$

By the standard theory of the optimal stopping problem (\citet{PS}), ${\cal P}(y)$ satisfies the following variational inequality:
\begin{align}
\begin{cases}\label{eq:VI}
\mathbb{L}{\cal P}+h(y) \le 0\;\;\;&\mbox{if}\;\;{\cal P}(y)=0, \vspace{2mm}\\
\mathbb{L}{\cal P}+h(y) = 0\;\;\;&\mbox{if}\;\;{\cal P}(y)>0,\vspace{2mm}\\
\end{cases}
\end{align}
 where $h(y)\equiv y\Psi(y)=(\tilde{u}_B(y)-\tilde{u}_A(y)+\e y)$ and the differential operator $\mathbb{L}$ is given by 
\begin{equation*}
\mathbb{L}\equiv \dfrac{\t^2}{2}y^2\dfrac{d^2}{dy^2} + (\rho-r)y\dfrac{d}{dy} - \rho.
\end{equation*}

\begin{theorem}[Verification theorem]~\label{thm:verification-OSP}\\
Suppose that  variational inequality \eqref{eq:VI} has a solution ${\cal Q}(y)$ which is continuously differentiable in $y>0$ and twice continuously differentiable in $(0,\infty)\backslash\{b\}$ for some point $b>0$, and there exist positive constants $\zeta_1$ and $\zeta_2$ such that 
\begin{equation*}
|{\cal Q}'(y)|\le \zeta_1\left(y^{-\zeta 2}+y^{\zeta_2}\right)\;\;\mbox{for}\;\;y>0. 
\end{equation*}
	Then, 
	\begin{itemize}
		\item[(a)] ${\cal P}(y) \le {\cal Q}(y)$.
		\item[(b)] If $\lim_{t\to \infty} e^{-\rho t} \mathbb{E}\left[{\cal Q}({\cal Y}_t)\right]=0$, then ${\cal Q}(y)={\cal P}(y)$ and the optimal solution to the problem in \eqref{eq:OSP} is given by 
		$$
		\tau_R(y) = \inf \{t\ge0 \mid {\cal Y}_t^y\in \{y>0\mid {\cal P}(y)=0 \} \}. 
		$$
	\end{itemize}
\end{theorem}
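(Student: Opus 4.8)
The plan is to run a classical verification argument, treating the candidate $\mathcal{Q}$ as a smooth supersolution of the variational inequality \eqref{eq:VI}. The dual state process is the geometric diffusion $d\mathcal{Y}_t^y = (\rho-r)\mathcal{Y}_t^y\,dt - \theta\,\mathcal{Y}_t^y\,dB_t$, whose generator together with the discount rate is exactly the operator $\mathbb{L}$ in \eqref{eq:VI}, so that for a $C^2$ test function $g$ one has $d\big(e^{-\rho t}g(\mathcal{Y}_t^y)\big)=e^{-\rho t}(\mathbb{L}g)(\mathcal{Y}_t^y)\,dt - e^{-\rho t}\theta\,\mathcal{Y}_t^y g'(\mathcal{Y}_t^y)\,dB_t$.

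For part (a) I would apply Itô's formula to $e^{-\rho t}\mathcal{Q}(\mathcal{Y}_t^y)$. Since $\mathcal{Q}$ is only $C^1$ globally and $C^2$ off the single point $b$, the first step is to justify a generalized Itô formula: as $\mathcal{Q}'$ is continuous across $b$ and the occupation time of the level $b$ by the diffusion has zero Lebesgue measure, the local-time term at $b$ vanishes and the usual formula holds with $\mathcal{Q}''$ taken a.e. This gives
$$e^{-\rho(t\wedge\tau_n)}\mathcal{Q}(\mathcal{Y}_{t\wedge\tau_n}^y) = \mathcal{Q}(y) + \int_0^{t\wedge\tau_n}e^{-\rho s}(\mathbb{L}\mathcal{Q})(\mathcal{Y}_s^y)\,ds - \int_0^{t\wedge\tau_n}e^{-\rho s}\theta\,\mathcal{Y}_s^y\,\mathcal{Q}'(\mathcal{Y}_s^y)\,dB_s,$$
for a localizing sequence $\tau_n$. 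The growth bound $|\mathcal{Q}'(y)|\le \zeta_1(y^{\zeta_2}+y^{-\zeta_2})$ together with finiteness of all polynomial moments of $\mathcal{Y}_s^y$ shows the stochastic integral is a true martingale, so one may take deterministic times $t=T$ and drop the expectation of the $dB$-term. Because \eqref{eq:VI} forces $\mathbb{L}\mathcal{Q}+h\le 0$ everywhere and $\mathcal{Q}\ge 0$, for any $\tau\in\mathcal{S}$ one obtains after stopping at $\tau\wedge T$ and taking expectations
$$\mathbb{E}\left[\int_0^{\tau\wedge T}e^{-\rho s}h(\mathcal{Y}_s^y)\,ds\right] \le \mathcal{Q}(y) - \mathbb{E}\left[e^{-\rho(\tau\wedge T)}\mathcal{Q}(\mathcal{Y}_{\tau\wedge T}^y)\right]\le \mathcal{Q}(y).$$
Letting $T\to\infty$ and invoking Lemma \ref{lem:finite} (which, with $\mathbb{E}[\int_0^\infty e^{-\rho s}\mathcal{Y}_s^y\,ds]=y/r$, guarantees $\mathbb{E}[\int_0^\infty e^{-\rho s}|h(\mathcal{Y}_s^y)|\,ds]<\infty$, so dominated convergence applies) yields $\mathbb{E}[\int_0^\tau e^{-\rho s}h\,ds]\le \mathcal{Q}(y)$; the supremum over $\tau$ then gives $\mathcal{P}(y)\le \mathcal{Q}(y)$.

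For part (b) I repeat the computation with $\tau_R=\inf\{t\ge0:\mathcal{Q}(\mathcal{Y}_t^y)=0\}$, the hitting time of the stopping region $\{\mathcal{Q}=0\}$. On $[0,\tau_R)$ the process stays in $\{\mathcal{Q}>0\}$, where the second line of \eqref{eq:VI} gives $\mathbb{L}\mathcal{Q}+h=0$, turning the inequality into
$$\mathbb{E}\left[\int_0^{\tau_R\wedge T}e^{-\rho s}h(\mathcal{Y}_s^y)\,ds\right] = \mathcal{Q}(y) - \mathbb{E}\left[e^{-\rho(\tau_R\wedge T)}\mathcal{Q}(\mathcal{Y}_{\tau_R\wedge T}^y)\right].$$
It remains to kill the boundary term as $T\to\infty$: I split it over $\{\tau_R\le T\}$, where path-continuity and closedness of the stopping region force $\mathcal{Q}(\mathcal{Y}_{\tau_R}^y)=0$, and over $\{\tau_R>T\}$, where $0\le e^{-\rho T}\mathcal{Q}(\mathcal{Y}_T^y)$ and the expectation is controlled by the transversality hypothesis $\lim_{T\to\infty}e^{-\rho T}\mathbb{E}[\mathcal{Q}(\mathcal{Y}_T^y)]=0$. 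Hence $\mathbb{E}[\int_0^{\tau_R}e^{-\rho s}h\,ds]=\mathcal{Q}(y)$, so $\tau_R$ attains the value and, combined with (a), $\mathcal{P}=\mathcal{Q}$; the two stopping regions therefore coincide and $\tau_R$ may equivalently be written with $\{\mathcal{P}=0\}$ as in the statement.

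The genuine obstacle I anticipate is the generalized Itô formula at the non-smooth point $b$: one must verify that $\mathcal{Q}\in C^1$ with a locally bounded a.e.\ second derivative truly suppresses the local-time contribution, rather than merely asserting it. Everything else—martingality of the $dB$-integral from the growth bound and the moments of $\mathcal{Y}$, the dominated-convergence passage from Lemma \ref{lem:finite}, and the transversality estimate—is routine. One should also record the minor point that $\tau_R$ may equal $+\infty$ or $0$, for which the identity still reads correctly as an infinite-horizon, respectively immediate-stopping, limit.
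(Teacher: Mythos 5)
Your proposal is correct and follows exactly the route the paper intends: the paper omits the proof, merely noting that one applies the generalized It\^{o} formula for $C^1$ functions (Exercise 6.24 in Karatzas--Shreve) to $e^{-\rho t}{\cal Q}({\cal Y}_t^y)$ and then runs the standard verification argument of Theorem 3.2 and Lemma 3.4 in \citet{KMZ}, which is precisely the supersolution inequality, martingale step via the growth bound, dominated convergence via Lemma \ref{lem:finite}, and transversality argument you spell out. Your write-up simply supplies the details the authors chose to cite rather than reproduce, including the correct observation that the continuity of ${\cal Q}'$ across $b$ suppresses the local-time term.
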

\begin{proof}
	 Even though ${\cal Q}(y)$ is only continuously differentiable, we can still apply It\^{o}'s lemma (see Exercise 6.24 in \citet{KS2}) to the process $\{e^{-\rho t}{\cal Q}({\cal Y}_t^y)\}$. The proof of this theorem is almost similar to that of Theorem 3.2 and Lemma 3.4 in \citet{KMZ} and  we omit the proof.
\end{proof}

From Theorem \ref{thm:verification-OSP}, we know that the transversality condition is necessary to guarantee that the solution to the HJB equation is the option value of voluntary retirement:
\begin{equation}\label{eq:transversality}
\lim_{t\to \infty} e^{-\rho t} \mathbb{E}\left[{\cal P}({\cal Y}_t^y)\right]=0.
\end{equation}

We provide the utility value of labor in the following proposition: 
\begin{pro}\label{pro:OS}
The utility value of lifetime labor,	${\cal P}(y)$,  is given by 
		\begin{eqnarray}\label{eq:P-closed}
		\begin{split}
		{\cal P}(y)=\sup_{\tau \in {\cal S}} \mathbb{E}\left[\int_0^\tau e^{-\rho t}\left(\tilde{u}_B({\cal Y}_t^y )-\tilde{u}_A({\cal Y}_t^y)+{\cal Y}_t^y \e\right)dt\right]=
		\begin{cases}
		Dy^{n_2}+\Xi_h(y)\;\;\;&\mbox{for}\;\;y\ge z_R,\vspace{2mm}\\
		0\;\;\;&\mbox{for}\;\;\;0<y\le z_R,
		\end{cases}
		\end{split}
		\end{eqnarray}
where $z_R\in(0,\bar{z})$ is a unique solution of the following equation
\begin{equation*}
\int_{z_R}^\infty \nu^{-n_1-1}{h}(\nu)d\nu =0
\end{equation*}
and
\begin{equation*}
D=-\dfrac{2}{\t^2(n_1-n_2)}\int_0^{z_R}\nu^{-n_2-1}{h}(\nu)d\nu. 
\end{equation*}

Moreover, the optimal stopping time $\tau_R(y)$ is 
$$
\tau_R(y)=\inf\{t\ge 0 \mid {\cal Y}_t^y \le z_R \}.
$$
That is, 
\begin{equation*}
{\cal P}(y)=\sup_{\tau \in {\cal S}} \mathbb{E}\left[\int_0^\tau e^{-\rho t}\left(\tilde{u}_B({\cal Y}_t^y )-\tilde{u}_A({\cal Y}_t^y)+{\cal Y}_t^y \e\right)dt\right]=\mathbb{E}\left[\int_0^{\tau_R(y)} e^{-\rho t}\left(\tilde{u}_B({\cal Y}_t^y )-\tilde{u}_A({\cal Y}_t^y)+{\cal Y}_t^y \e\right)dt\right].
\end{equation*}
\end{pro}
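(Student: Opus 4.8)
The plan is to produce the closed form by applying the Verification Theorem (Theorem~\ref{thm:verification-OSP}) to an explicitly constructed candidate $\mathcal{Q}$. The shape of the candidate is dictated by the sign structure of the reward rate $h(y)=y\Psi(y)$: by Lemma~\ref{lem:psi} and Assumption~\ref{as:limit-psi}, $\Psi$ is strictly increasing with $\Psi(\bar z)=0$, so $h<0$ on $(0,\bar z)$ and $h>0$ on $(\bar z,\infty)$. Since it is worth continuing to accrue reward while the marginal value of wealth $\mathcal{Y}_t^y$ is high and stopping once it drops, I posit a threshold rule with continuation region $\{y>z_R\}$ and stopping region $\{0<y\le z_R\}$, and seek a candidate $\mathcal{Q}(y)=0$ on $(0,z_R]$ and $\mathcal{Q}(y)=Dy^{n_2}+\Xi_h(y)$ on $[z_R,\infty)$.

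On the continuation region $\mathcal{Q}$ must solve $\mathbb{L}\mathcal{Q}+h=0$. The homogeneous equation $\mathbb{L}w=0$ has the power solutions $y^{n_1},y^{n_2}$ with $n_1,n_2$ the roots of \eqref{eq:quadratic}, and I will check by direct differentiation that $\Xi_h$ is a particular solution, i.e. $\mathbb{L}\Xi_h=-h$; writing $\Xi_h'(y)=\tfrac{2}{\theta^2(n_1-n_2)}[n_2y^{n_2-1}\int_0^y\nu^{-n_2-1}h\,d\nu+n_1y^{n_1-1}\int_y^\infty\nu^{-n_1-1}h\,d\nu]$, the boundary terms $y^{-1}h(y)$ from differentiating the two integrals cancel, and the identity $\mathbb{L}\Xi_h=-h$ is equivalent to the Feynman--Kac equation for the never-retire value $\Xi_h(y)=\mathbb{E}[\int_0^\infty e^{-\rho t}h(\mathcal{Y}_t^y)dt]$. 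The coefficient of $y^{n_1}$ in the general solution must be set to zero: since $n_1$ is a root of \eqref{eq:quadratic}, a computation gives $e^{-\rho t}\mathbb{E}[(\mathcal{Y}_t^y)^{n_1}]=y^{n_1}$, a non-zero constant, so any $y^{n_1}$ term would violate the transversality hypothesis of Theorem~\ref{thm:verification-OSP}(b); hence only $Dy^{n_2}$ survives.

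I then pin down $z_R$ and $D$ by $C^1$ pasting at the free boundary, namely value matching $\mathcal{Q}(z_R)=0$ and smooth pasting $\mathcal{Q}'(z_R)=0$. Substituting $\mathcal{Q}$ and the expression for $\Xi_h'$ and eliminating $D$, the two conditions collapse to $\int_{z_R}^\infty\nu^{-n_1-1}h(\nu)d\nu=0$ together with $D=-\tfrac{2}{\theta^2(n_1-n_2)}\int_0^{z_R}\nu^{-n_2-1}h(\nu)d\nu$, exactly the stated formulas. Existence and uniqueness of $z_R$ in $(0,\bar z)$ follow from studying $g(z)=\int_z^\infty\nu^{-n_1-1}h(\nu)d\nu$: from $g'(z)=-z^{-n_1-1}h(z)$ the function $g$ increases on $(0,\bar z)$, decreases on $(\bar z,\infty)$, and $g(\infty)=0$, so $g>0$ on $[\bar z,\infty)$ and every zero lies in $(0,\bar z)$; near $0$ one has $h(\nu)\sim\Psi(0+)\nu$ with $\Psi(0+)<0$ by Assumption~\ref{as:limit-psi}, while evaluating \eqref{eq:quadratic} at $n=1$ yields $-r<0$, forcing $n_1>1$, so $\int_0\nu^{-n_1-1}h\,d\nu$ diverges and $g(0+)=-\infty$. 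Monotonicity on $(0,\bar z)$ and the intermediate value theorem then give a unique $z_R\in(0,\bar z)$.

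It remains to verify that $\mathcal{Q}$ satisfies the hypotheses of Theorem~\ref{thm:verification-OSP}. On $(0,z_R)$ we have $\mathbb{L}\mathcal{Q}+h=h<0$ because $z_R<\bar z$; on $(z_R,\infty)$ the ODE gives $\mathbb{L}\mathcal{Q}+h=0$, so the variational inequality \eqref{eq:VI} holds once $\mathcal{Q}>0$ on $(z_R,\infty)$ is established. For positivity, a direct sign count using $\int_{z_R}^\infty\nu^{-n_1-1}h\,d\nu=0$ shows $\mathcal{Q}'>0$ on $(z_R,\bar z)$, while on $(\bar z,\infty)$ an interior zero of $\mathcal{Q}$ would be a minimum forcing $\mathbb{L}\mathcal{Q}\ge0$, contradicting $\mathbb{L}\mathcal{Q}=-h<0$; combined with $\mathcal{Q}(y)\sim\tfrac{\e}{r}y\to\infty$ this yields $\mathcal{Q}>0$. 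Smoothness ($C^1$ globally, $C^2$ off $z_R$) holds by construction, the growth bound on $\mathcal{Q}'$ follows from Lemma~\ref{lem:Xi-Gamma} (the estimate \eqref{eq:Gamma-prime-1} applied to the conjugate parts of $h$ plus the constant $\Xi'_{\e\,\mathrm{id}}=\e/r$), and transversality follows from the linear bound $0\le\mathcal{Q}(y)\le C(1+y)$, which gives $e^{-\rho t}\mathbb{E}[\mathcal{Q}(\mathcal{Y}_t^y)]\le Ce^{-\rho t}+Cy\,e^{-rt}\to0$. Theorem~\ref{thm:verification-OSP} then delivers $\mathcal{Q}=\mathcal{P}$ and $\tau_R(y)=\inf\{t\ge0:\mathcal{Y}_t^y\le z_R\}$. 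The main obstacle is the free-boundary analysis — establishing $g(0+)<0$, which rests on the inequality $n_1>1$ and Assumption~\ref{as:limit-psi}, and the positivity of $\mathcal{Q}$ on the continuation region; by contrast the ODE solution, the pasting algebra, and the growth and transversality checks are routine.
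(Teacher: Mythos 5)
Your overall architecture coincides with the paper's: construct the threshold candidate from the ODE $\mathbb{L}\mathcal{Q}+h=0$ with particular solution $\Xi_h$, kill the $y^{n_1}$ mode, pin down $z_R$ and $D$ by $C^1$ pasting, prove existence and uniqueness of $z_R\in(0,\bar z)$ by analyzing ${\cal G}(z)=\int_z^\infty\nu^{-n_1-1}h(\nu)\,d\nu$ (monotone up then down, positive on $[\bar z,\infty)$, divergent to $-\infty$ at $0+$ using $n_1>1$ and Assumption \ref{as:limit-psi}), check the variational inequality and positivity on the continuation region, and invoke Theorem \ref{thm:verification-OSP}. Your positivity argument on $(\bar z,\infty)$ uses a minimum principle where the paper instead shows ${\cal P}'$ increases on $(z_R,\bar z)$, decreases on $(\bar z,\infty)$ and tends to $\e/r>0$, hence ${\cal P}'>0$ throughout; both are valid. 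Your explicit derivation of $n_1>1$ from evaluating \eqref{eq:quadratic} at $n=1$ is a useful detail the paper leaves implicit. (The phrase ``$h(\nu)\sim\Psi(0+)\nu$'' is loose when $\Psi(0+)=-\infty$, but your divergence conclusion only needs $\Psi<-\delta$ near $0$, which is exactly the paper's argument.)

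There is, however, one step that fails as written: you claim the transversality condition follows from a linear bound $0\le\mathcal{Q}(y)\le C(1+y)$. No such bound holds for general $u\in\Sigma$. On the continuation region $\mathcal{Q}(y)=Dy^{n_2}+\Xi_{\tilde u_B}(y)-\Xi_{\tilde u_A}(y)+\tfrac{\e}{r}y$, and the only generic control on the conjugate terms coming from Lemma \ref{lem:finite} and Proposition \ref{pro:review:KMZ}(d) is $|\Xi_{\tilde u}'(y)|\le C(y^{n_1-1}+y^{n_2-1})$, which integrates to growth of order $y^{n_1}$ with $n_1>1$; and since $e^{-\rho t}\mathbb{E}[({\cal Y}_t^y)^{n_1}]=y^{n_1}$ is constant in $t$, a bound of order $y^{n_1}$ is \emph{not} enough to conclude $e^{-\rho t}\mathbb{E}[\mathcal{Q}({\cal Y}_t^y)]\to 0$. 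The correct route — the one the paper takes — is to bound $|{\cal P}(y)|\le|D|z_R^{n_2}+|\Xi_{\tilde u_A}(y)|+|\Xi_{\tilde u_B}(y)|+\tfrac{\e}{r}y$ and apply Proposition \ref{pro:review:KMZ}(e) directly to each $\Xi$ term, which gives $\lim_{t\to\infty}e^{-\rho t}\mathbb{E}\left[|\Xi_{\tilde u}({\cal Y}_t^y)|\right]=0$ without any pointwise linear bound; the remaining terms are handled by $e^{-\rho t}\mathbb{E}[{\cal Y}_t^y]=e^{-rt}y\to0$. With that substitution your proof closes.
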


We first derive the utility value of lifetime labor, ${\cal P}(y),$ in a heuristic and intuitive manner.  Suppose that there exists a boundary $z_R$ such that if the agent's marginal utility ${\cal Y}_t^y$ hits the boundary, the agent retires. The option value becomes zero after the agent retires. Hence, by the \textit{smooth pasting condition} we have 
	\be\label{boundary_condition}
	{\cal P}(z_R) = {\cal P}'(z_R)=0, \quad {\cal P}(y) = 0 \;\;\;\mbox{for}\;\; 0<y\le z_R.
	\ee
	
	When the agent is still working, ${\cal P}(y)$ satisfies  the Hamilton-Jacobi-Bellman(HJB) equation
	\be\label{eq:HJB}
	\dfrac{\t^2}{2}y^2{\cal P}''(y) + (\rho-r)y{\cal P}(y) - \rho {\cal P} + {h}(y)=0.
	\ee
	
	That is, for $y>z_R$, a general solution to the HJB equation \eqref{eq:HJB} can be represented as the sum of a general solution to the homogeneous equation and a particular solution as 
	\begin{equation*}
		{\cal P}(y) = D_1y^{n_1}+D_2y^{n_2} +\Xi_{h} (y),
	\end{equation*}
	where the particular solution $\Xi_h(y)$ is given by 
	\be\label{particular_solution}
	\Xi_h(y)=\dfrac{2}{\t^2(n_1-n_2)}\left[y^{n_2}\int_0^{y}\nu^{-n_2-1}{h}(\nu)d\nu +y^{n_1}\int_y^\infty \nu^{-n_1-1}{h}(\nu)d\nu\right]
	\ee
	To satisfy the transversality condition \eqref{eq:transversality} for ${\cal P}(y)$, the coefficient $D_1$ of $y^{n_1}$ should be zero, i.e, 
	$$
	D_1= 0. 
	$$
	
	Thus, we can write down ${\cal P}(y)$ as follows: 
	\begin{equation*}
		{\cal P}(y) = Dy^{n_2} +\Xi_h(y). 
	\end{equation*}

	By using the smooth pasting condition (${\cal P}(z_R)={\cal P}'(z_R)=0$), we have 
	\begin{equation*}
		\int_{z_R}^\infty \nu^{-n_1-1}{h}(\nu)d\nu =0\;\;\;\mbox{and}\;\;\;D=-\dfrac{2}{\t^2(n_1-n_2)}\int_0^{z_R}\nu^{-n_2-1}{h}(\nu)d\nu. 
	\end{equation*}
We will now proceed to give a formal proof of the proposition.

\begin{proof}

	Let us denote ${\cal G}(y)$ by 
		\begin{align*}
		{\cal G}(y)=\int_{y}^\infty \nu^{-n_1-1}{h}(\nu)d\nu=\int_{y}^\infty \nu^{-n_1}{\Psi}(\nu)d\nu.
		\end{align*}
		
	By Lemma \ref{lem:psi}, we deduce that ${\cal G}(y)$ is strictly increasing on $(0,\bar{z})$ and strictly decreasing on $(\bar{z},\infty)$. Moreover, 
	$$
	{\cal G}(z)>0\;\;\mbox{for all}\;\; z\ge \bar{z}.
	$$
	
	By Assumption \ref{as:limit-psi}, there exist $\d>0$ and $y_\d >0$ such that 
	$$
	\Psi(y)<-\d\;\;\;\mbox{for all}\;\;y\in(0,y_\d). 
	$$
	
	For sufficiently small $y<y_\d$, we have
	\begin{align*}
	{\cal G}(y) = \int_y^\infty \nu^{-n_1}\Psi(\nu)d\nu = &\int_y^{y_\d} \nu^{-n_1}\Psi(\nu)d\nu+\int_{y_\d}^\infty \nu^{-n_1}\Psi(\nu)d\nu\nonumber\\
	<&-\d\int_y^{y_\d}\nu^{-n_1}d\nu +\int_{y_\d}^\infty \nu^{-n_1}\Psi(\nu)d\nu\\
	=&-\d\dfrac{1}{1-n_1}({y_\d}^{1-n_1}-y^{1-n_1})+\int_{y_\d}^\infty \nu^{-n_1}\Psi(\nu)d\nu.\nonumber
	\end{align*}
	
	Since $\displaystyle\int_{y_\d}^\infty \nu^{-n_1}|\Psi(\nu)|d\nu<\infty$, 
	\begin{equation*}
	\lim_{y\to 0+}{\cal G}(y) < \d\dfrac{1}{n_1-1}({y_\d}^{1-n_1}-\lim_{y\to 0+} y^{1-n_1})+\int_{y_\d}^\infty \nu^{-n_1}\Psi(\nu)d\nu =-\infty. 
	\end{equation*}
	Thus, there exists a unique $z_R\in(0,\bar{z})$ such that ${\cal G}(z_R)=0$. \\
	
	\noindent {\bf Claim 1:} ${\cal P}(y)$, defined by 
	\begin{align*}
	{\cal P}(y)=
	\begin{cases}
	Dy^{n_2} + \Xi_h(y)\;\;\;&\mbox{for}\;\;y>z_R\vspace{2mm}\\
	0\;\;\;&\mbox{for}\;\;0<y\le z_R,
	\end{cases}
	\end{align*} 
	is continuously differentiable in $(0,\infty)$ and twice continuously differentiable in $(0,\infty)\backslash\{z_R\}$ and satisfies the variational inequality \eqref{eq:VI}. \\
	
	\noindent{\bf Proof of Claim 1:} By the construction of ${\cal P}(y)$, it is clear that ${\cal P}(y)$ is continuously differentiable in $(0,\infty)$ and twice continuously differentiable in $(0,\infty)\backslash\{z_R\}$.
	
	Note that for $y\in(0,z_R]$ 
	\begin{align*}
	\mathbb{L}{\cal P}(y)+h(y) = h(y) =y\Psi(y) < 0,
	\end{align*}
	where we have used that fact that $\Psi(y)$ is strictly increasing $y>0$ and $\Psi(\bar{z})=0$.
	
	Since $h(y)=\tilde{u}_B(y)-\tilde{u}_A(y)+\e y$, we have
	\begin{equation*}
	\Xi_h (y) = \Xi_{\tilde{u}_B}(y)-\Xi_{\tilde{u}_A}(y)+\dfrac{\e}{r}y. 
	\end{equation*}
	
	It follows from Lemma \ref{lem:Xi-Gamma} that 
	\begin{equation*}
	\Xi_h'(y) = \Gamma_{I_{u_A}}(y)-\Gamma_{I_{u_B}}(y) +\dfrac{\e}{r}
	\end{equation*}
	and 
	\begin{equation*}
	\lim_{y\uparrow \infty}\Xi_h'(y)=\lim_{y\uparrow \infty} \Gamma_{I_{u_A}}(y)- \lim_{y\uparrow \infty}\Gamma_{I_{u_B}}(y)+\dfrac{\e}{r}=\dfrac{\e}{r}.
	\end{equation*}
	
	Since ${\cal P}(y)=Dy^{n_2} + \Xi_h(y)$ for $y>z_R$, we have 
		\begin{align*}
		\lim_{y\uparrow \infty}{\cal P}'(y)=\lim_{y\uparrow \infty}(n_2Dy^{n_2-1}) + \lim_{y\uparrow \infty}\Xi_h'(y)=\dfrac{\e}{r}>0.
		\end{align*}
	
	Note that for $y>z_R$, 
		\begin{align*}
		{\cal P}(y)&=Dy^{n_2} + \Xi_h(y)\nonumber\\
		&=-\dfrac{2}{\t^2(n_1-n_2)}y^{n_2}\int_0^{z_R}\nu^{-n_2-1}{h}(\nu)d\nu+\dfrac{2}{\t^2(n_1-n_2)}\left[y^{n_2}\int_0^{y}\nu^{-n_2-1}{h}(\nu)d\nu +y^{n_1}\int_y^\infty \nu^{-n_1-1}{h}(\nu)d\nu\right]\\
		&=\dfrac{2}{\t^2(n_1-n_2)}\left[y^{n_2}\int_{z_R}^{y}\nu^{-n_2-1}{h}(\nu)d\nu +y^{n_1}\int_y^\infty \nu^{-n_1-1}{h}(\nu)d\nu\right].\nonumber
		\end{align*}

   Thus, 
   \begin{footnotesize}
   	 \begin{equation*}
   	{\cal P}'(y)=\dfrac{2}{\t^2(n_1-n_2)}\left[n_2y^{n_2-1}\int_{z_R}^{y}\nu^{-n_2-1}{h}(\nu)d\nu +n_1y^{n_1-1}\int_y^\infty \nu^{-n_1-1}{h}(\nu)d\nu\right]\;\;\mbox{for}\;\;y>z_R.
   	\end{equation*}
   \end{footnotesize}

Since $h(y)\ge 0$ for $y\ge \bar{z}$ and $h(y)<0$ for $y<\bar{z}$, we deduce that ${\cal P}'(y)$ is strictly increasing in $y\in(z_R,\bar{z})$ and strictly decreasing in $y>\bar{z}$. 

It follows from ${\cal P}'(z_R)=0$ and $\lim_{y\uparrow \infty}{\cal P}'(y)=\frac{\e}{r}$ that 
$$
{\cal P}'(y)>0\;\;\mbox{for}\;\;y>z_R. 
$$
That is, ${\cal P}(y)$ is strictly increasing in $y\in(z_R,\infty)$. Since ${\cal P}(z_R)=0$, we have 
$$
{\cal P}(y)>0\;\;\mbox{for}\;\;y>z_R. 
$$	

Therefore, ${\cal P}(y)$ satisfies the variational inequality \eqref{eq:VI}. This completes the proof of {\bf Claim 1. }\\

\noindent{\bf Claim 2:} There exists positive constant $\zeta$ such that 
\begin{equation*}
|{\cal P}'(y)|\le \zeta \left(y^{n_1-1}+y^{n_2-1}\right)
\end{equation*}
and $\lim_{y\to\infty} e^{-\rho t} \mathbb{E}\left[{\cal P}({\cal Y}_t^y)\right]=0$. \\

\noindent {\bf Proof of Claim 2:} Since
\begin{equation*}
{\cal P}(y) = \left(Dy^{n_2} +\Xi_h(y)\right){\bf 1}_{\{y>z_R\}}=\left(Dy^{n_2} +\Xi_{\tilde{u}_B}(y)-\Xi_{\tilde{u}_A}(y)+\dfrac{\e}{r}y\right){\bf 1}_{\{y>z_R\}},
\end{equation*}
we have
\begin{align*}
|{\cal P}'(y)|\le & \left|n_2 Dy^{n_2-1} +\Xi_{\tilde{u}_B}'(y)-\Xi_{\tilde{u}_A}'(y)+\dfrac{\e}{r} \right|\\
\le & n_2|D|y^{n_2-1}+|\Xi_{\tilde{u}_B}'(y)|+|\Xi_{\tilde{u}_A}'(y)|+\dfrac{\e}{r}.\nonumber
\end{align*}

Since $u_A, u_B\in\Sigma$, it follows from Lemma \ref{lem:finite} and Proposition \ref{pro:review:KMZ} (d) that 
\begin{equation*}
|{\cal P}'(y)|\le \zeta \left(y^{n_1-1}+y^{n_2-1}\right)
\end{equation*}
for some constant $\zeta>0$. 

	%%%%%%%%%%%%%%%%
	
	Moreover, 
	\begin{align*}
	|{\cal P}(y)| =& |\left(Dy^{n_2} +\Xi_h(y)\right){\bf 1}_{\{y>z_R\}}|\nonumber\\
	\le&|D|y^{n_2}{\bf 1}_{\{y>z_R\}} +|\Xi_{\tilde{u}_A}(y)|+|\Xi_{\tilde{u}_B}(y)|+\dfrac{\e}{r}y.\\
	\le&|D|z_R^{n_2} +|\Xi_{\tilde{u}_A}(y)|+|\Xi_{\tilde{u}_B}(y)|+\dfrac{\e}{r}y.\nonumber
	\end{align*}
	
	By Proposition \ref{pro:review:KMZ} (e), we have 
	\begin{equation*}
		\lim_{t\to\infty}e^{-\rho t}\mathbb{E}\left[|\Xi_{\tilde{u}_A}({\cal Y}_t^y)|\right]=	\lim_{t\to\infty}e^{-\rho t}\mathbb{E}\left[|\Xi_{\tilde{u}_B}({\cal Y}_t^y)|\right]=0.
	\end{equation*}
	
	Since $\lim_{t\to\infty} e^{-\rho t}|D|z_R^{n_2}=0$ and $\lim_{t\to\infty}e^{-\rho t}\mathbb{E}\left[{\cal Y}_t^y\right]=\lim_{t\to\infty} e^{-rt}y=0$, it follows that 
	\begin{equation*}
	\lim_{t\to\infty}e^{-\rho t}\mathbb{E}\left[|{\cal P}({\cal Y}_t^y)|\right]=0.
	\end{equation*}
	This completes the proof of {\bf Claim 2.}\\
	
	\noindent By {\bf Claim 1}, {\bf Claim 2} and Theorem \ref{thm:verification-OSP},  we have proved the desired results.
\end{proof}

Note that 
\begin{equation*}
\Xi_h(y) = \Xi_{\tilde{u}_B}(y)-\Xi_{\tilde{u}_A}(y)+\dfrac{\e}{r}y\;\;\mbox{and}\;\;J_A(y)=\Xi_{\tilde{u}_A}(y).
\end{equation*}

From \eqref{eq:dual_value}, we derive the following corollary.
\begin{cor}\label{cor:dual}
	The dual value function $J(y)$ is given by 
	\begin{eqnarray}\label{eq:form_J}
	\begin{split}
	J(y)=
	\begin{cases}
	Dy^{n_2}+\Xi_{\tilde{u}_B}(y)+\dfrac{\e}{r}y\;\;\;&\mbox{for}\;\;y\ge z_R,\vspace{2mm}\\
\Xi_{\tilde{u}_A}(y)\;\;\;&\mbox{for}\;\;\;0<y\le z_R.
	\end{cases}
	\end{split}
	\end{eqnarray}
\end{cor}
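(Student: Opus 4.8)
The plan is to read the result straight off the decomposition recorded in Problem \ref{pr:optimal_stopping} together with the closed form for $\mathcal{P}$ established in Proposition \ref{pro:OS}. Equation \eqref{eq:dual_value} expresses the dual value function as $J(y) = J_A(y) + \mathcal{P}(y)$, so the entire content of the corollary reduces to substituting the two branches of $\mathcal{P}(y)$ and simplifying by means of the two algebraic identities recorded immediately before the statement, namely $J_A(y) = \Xi_{\tilde{u}_A}(y)$ and $\Xi_h(y) = \Xi_{\tilde{u}_B}(y) - \Xi_{\tilde{u}_A}(y) + \dfrac{\e}{r} y$.

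First, on the retirement region $0 < y \le z_R$, Proposition \ref{pro:OS} gives $\mathcal{P}(y) = 0$, so $J(y) = J_A(y) = \Xi_{\tilde{u}_A}(y)$, which is exactly the lower branch. Second, on the working region $y \ge z_R$, Proposition \ref{pro:OS} gives $\mathcal{P}(y) = Dy^{n_2} + \Xi_h(y)$; substituting this and inserting both identities, I would compute
\begin{equation*}
J(y) = \Xi_{\tilde{u}_A}(y) + Dy^{n_2} + \Xi_{\tilde{u}_B}(y) - \Xi_{\tilde{u}_A}(y) + \frac{\e}{r} y = Dy^{n_2} + \Xi_{\tilde{u}_B}(y) + \frac{\e}{r} y,
\end{equation*}
the two $\Xi_{\tilde{u}_A}$ terms cancelling to leave the upper branch.

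There is no substantive obstacle here; the corollary is a bookkeeping consequence of the results already in hand, and I would present it essentially as the two-line computation above. The only point worth noting is internal consistency at the junction $y = z_R$: because $\mathcal{P}$ is continuous with $\mathcal{P}(z_R) = 0$ (established in Claim 1 of the proof of Proposition \ref{pro:OS}), the two branches of $J$ agree at $z_R$, so the piecewise definition is unambiguous.
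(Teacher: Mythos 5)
Your proposal is correct and follows exactly the route the paper takes: the corollary is obtained by substituting the closed form of ${\cal P}(y)$ from Proposition \ref{pro:OS} into $J(y)=J_A(y)+{\cal P}(y)$ from \eqref{eq:dual_value} and simplifying with the identities $J_A(y)=\Xi_{\tilde{u}_A}(y)$ and $\Xi_h(y)=\Xi_{\tilde{u}_B}(y)-\Xi_{\tilde{u}_A}(y)+\frac{\e}{r}y$ stated just before the corollary. Your additional remark on continuity at $y=z_R$ is a harmless bonus.
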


\section{ Optimal Policies and Human Wealth}\label{sec:optimal_policies}

In this section we derive the optimal policies of the agent and discuss their properties. 

\begin{lem}\label{lem:wealth-y}
	Let us denote ${\cal X}(y)$ and ${\Pi}(y)$ by 
	\begin{equation*}
	{\cal X}(y) = -J'(y)\;\;\;\mbox{and}\;\;\;\Pi(y)=\dfrac{\t}{\s}yJ''(y), 
	\end{equation*}
	respectively.
	
	Then, the dynamics of ${\cal X}({\cal Y}_t^y)$ follows 
	\begin{equation*}
	d{\cal X}({\cal Y}_t^{y})=\left( r{\cal X}({\cal Y}_t^{y})+(\m-r)\Pi({\cal Y}_t^y)-\hat{c}({\cal Y}_t^{y})+ \e{\bf 1}_{\{t<\tau_R(y)\}}\right)dt+\s \Pi({\cal Y}_t^y) dB_t
	\end{equation*}
	and 
	\begin{equation*}
	{\cal X}(y)= \mathbb{E}\left[\int_0^{\infty}\xi_t(\hat{c}({\cal Y}_t^y) -\e {\bf 1}_{\{t<\tau_R(y)\}})dt\right].
	\end{equation*}
\end{lem}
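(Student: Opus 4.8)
The plan is to verify the two assertions by Itô calculus applied to the explicit form of $J$ from Corollary \ref{cor:dual}, exploiting that $J$ solves two regional ODEs. First I would record the state dynamics: writing $\mathcal{Y}_t^y=ye^{\rho t}\xi_t=ye^{(\rho-r-\theta^2/2)t-\theta B_t}$ and applying Itô gives $d\mathcal{Y}_t^y=\mathcal{Y}_t^y[(\rho-r)\,dt-\theta\,dB_t]$. Next, since $\mathbb{L}\Xi_f+f=0$, $\mathbb{L}(Dy^{n_2})=0$ and $\mathbb{L}(\tfrac{\e}{r}y)=-\e y$, the function $J$ satisfies $\tfrac{\theta^2}{2}y^2J''+(\rho-r)yJ'-\rho J+\tilde{u}_B(y)+\e y=0$ on $(z_R,\infty)$ and $\tfrac{\theta^2}{2}y^2J''+(\rho-r)yJ'-\rho J+\tilde{u}_A(y)=0$ on $(0,z_R)$.

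For $\mathcal{X}(\mathcal{Y}_t^y)=-J'(\mathcal{Y}_t^y)$ I would apply Itô on each region. The martingale part is $-J''(\mathcal{Y}_t^y)\cdot(-\theta\mathcal{Y}_t^y)\,dB_t=\theta\mathcal{Y}_t^yJ''(\mathcal{Y}_t^y)\,dB_t=\sigma\Pi(\mathcal{Y}_t^y)\,dB_t$, which matches by the definition of $\Pi$. For the drift, differentiating each regional ODE once in $y$ and using $\tilde{u}_B'=-I_{u_B}$, $\tilde{u}_A'=-I_{u_A}$ produces, after collecting the $J'$ terms into $-rJ'$, exactly the identity $-(\rho-r)yJ''-\tfrac{\theta^2}{2}y^2J'''=-rJ'+\theta^2yJ''-\hat{c}+\e\mathbf{1}$, i.e. the asserted drift $r\mathcal{X}+(\mu-r)\Pi-\hat{c}+\e\mathbf{1}$ once one uses $(\mu-r)\theta/\sigma=\theta^2$. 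So on each region the SDE holds verbatim, with $\hat{c}=I_{u_B}$ and wage $\e$ above $z_R$ and with $\hat{c}=I_{u_A}$ and no wage below $z_R$.

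The delicate point, and the step I expect to be the main obstacle, is the junction at $z_R$: $J$ is only $C^1$ there ($J''$ jumps, as the smooth-pasting computation $\mathcal{P}''(z_R^+)=-2h(z_R)/(\theta^2z_R^2)>0=\mathcal{P}''(z_R^-)$ shows), so a naïve global Itô expansion would produce a spurious local-time term. I would resolve this via the first-hitting-time structure of $\tau_R(y)=\inf\{t:\mathcal{Y}_t^y\le z_R\}$: on $[0,\tau_R)$ the process stays strictly inside $(z_R,\infty)$, where the working branch is $C^2$, so no local time accrues and the SDE holds with consumption $I_{u_B}$ and wage $\e$; on $[\tau_R,\infty)$ the continuation is the post-retirement problem whose dual value $J_A=\Xi_{\tilde{u}_A}$ is globally $C^2$, giving the SDE with consumption $I_{u_A}$ and no wage. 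Wealth is continuous across $\tau_R$ because $\mathcal{P}'(z_R)=0$ forces $J'(z_R)=J_A'(z_R)$ and $J\equiv J_A$ on $(0,z_R]$; this is precisely why one must not invoke the working branch after retirement.

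Finally, for the integral representation I would apply the product rule to $\xi_t\mathcal{X}(\mathcal{Y}_t^y)$ using the SDE just established and $d\xi_t=\xi_t(-r\,dt-\theta\,dB_t)$. The finite-variation part collapses, because $\mu-r-\theta\sigma=0$, to $-\xi_t(\hat{c}(\mathcal{Y}_t^y)-\e\mathbf{1}_{\{t<\tau_R(y)\}})\,dt$, so that $\xi_t\mathcal{X}(\mathcal{Y}_t^y)+\int_0^t\xi_s(\hat{c}-\e\mathbf{1})\,ds$ is a local martingale. Localizing and passing to the limit requires the transversality $\lim_{t\to\infty}\mathbb{E}[\xi_t\mathcal{X}(\mathcal{Y}_t^y)]=0$ together with uniform integrability; here I would invoke the genuine decay estimate of Lemma \ref{lem:Xi-Gamma}(d) for the $\Gamma$-type terms (the pointwise bound (e) does not decay after multiplication by $\xi_t$, so the expectation estimate is essential), $\mathbb{E}[\xi_t]=e^{-rt}\to0$ for the $\tfrac{\e}{r}$ term, and Claim 2 of Proposition \ref{pro:OS} for the $Dy^{n_2}$ term. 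Taking expectations then yields $\mathcal{X}(y)=\mathbb{E}[\int_0^\infty\xi_t(\hat{c}(\mathcal{Y}_t^y)-\e\mathbf{1}_{\{t<\tau_R(y)\}})\,dt]$; alternatively, once admissibility of $(\hat{c},\Pi,\tau_R)$ is checked, one may simply quote Proposition \ref{pro:budget-equaltiy} at $t=0$.
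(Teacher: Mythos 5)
Your proof is correct, and at the only genuinely delicate step it takes a different route from the paper --- one that is, in fact, the more defensible of the two. The shared skeleton is identical: the regional ODEs $\frac{\theta^2}{2}y^2\mathcal{X}''+(\rho-r+\theta^2)y\mathcal{X}'-r\mathcal{X}+I_{u_B}(y)-\e=0$ on $(z_R,\infty)$ and its analogue with $I_{u_A}$ and no wage on $(0,z_R)$, the product rule with $\xi_t$ collapsing the drift to $-\xi_t(\hat c-\e\mathbf{1}_{\{t<\tau_R(y)\}})$, and the passage to the limit via the expectation decay in Lemma \ref{lem:Xi-Gamma}(d), the growth bound in (e) for the martingale property of the stopped stochastic integral, and dominated convergence. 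Where you part ways is at $y=z_R$: the paper asserts $\mathcal{X}\in\mathcal{W}^{2,p}_{loc}(0,\infty)$ and applies Krylov's generalized It\^o formula in one global stroke, but $\mathcal{X}'=-J''$ has a genuine jump at $z_R$ (this is exactly the portfolio discontinuity $\frac{\theta}{\sigma}z_R\mathcal{P}''(z_R+)\neq 0$ computed at the end of Section \ref{sec:optimal_policies}), so the distributional second derivative of $\mathcal{X}$ carries a point mass there and the local-time term you warn about is real; moreover, for $t>\tau_R$ with $\mathcal{Y}_t^y>z_R$ the working branch of $J$ would return drift $-I_{u_B}+\e$ rather than the asserted $-I_{u_A}$. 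Your decomposition --- run the $C^2$ working branch only on $[0,\tau_R)$, where $\mathcal{Y}^y$ stays strictly above $z_R$, then switch to the globally smooth $J_A=\Xi_{\tilde u_A}$, with continuity of wealth across $\tau_R$ supplied by $\mathcal{P}'(z_R)=0$ --- is precisely the reading under which the stated SDE, with its time-indexed indicator $\mathbf{1}_{\{t<\tau_R(y)\}}$ and the consumption switch built into $\hat c$, is true, and it is what the duality argument in Theorem \ref{thm:main} actually uses. What this buys you is a proof that needs only the classical It\^o formula on each stochastic interval; what it costs is two small pieces of bookkeeping you should still write out: the localization at the left endpoint (stop at the hitting time of $z_R+1/n$ and let $n\to\infty$ using one-sided continuity of $J'$ and $J''$ at $z_R+$), and the strong Markov step identifying $-J_A'(z_R)=\Gamma_{I_{u_A}}(z_R)$ with the conditional value at $\tau_R$ of the post-retirement budget, which stitches the two intervals into the single integral representation for $\mathcal{X}(y)$.
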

\begin{proof}
	Since $J(y)$ is smooth in $y>z_R$, it follows that 
	$$
	\dfrac{\theta^2}{2}y^2{\cal X}''(y) + (\rho-r+\theta^2)y{\cal X}'(y) - r{\cal X}(y)+I_{u_B}(y)-\e=0\;\;\mbox{for}\;\;y>z_R.
	$$
	
	By Proposition \ref{pro:review:KMZ}, the explicit-form of $J_A(y)$ is given by 
	\begin{equation*}
	J_A(y)=\dfrac{2}{\t^2(n_1-n_2)}\left[y^{n_2}\int_0^y \nu^{-n_2-1}\tilde{u}_A(\nu)d\nu +y^{n_1}\int_y^\infty \nu^{-n_1-1}\tilde{u}_A(\nu)d\nu\right].
	\end{equation*}
	
	Since $J_A(y)=\Xi_{\tilde{u}_A}(y)$ , Lemma \ref{lem:Xi-Gamma} implies that $J_A(y)$ is strictly convex in $y>0$ and 
	$$
	\lim_{y\uparrow \infty}J_A'(y)=0\;\;\mbox{and}\;\;\lim_{y\downarrow 0+}J_A'(y)=-\infty.
	$$
	
	Moreover, $J_A(y)$ satisfies the following ordinary differential equation(ODE):
	\begin{equation}\label{eq:J_A-ODE}
	\dfrac{\t^2}{2}y^2 J_A''(y)+(\rho-r)yJ_A'(y)-\rho J_A + \tilde{u}_A(y)=0. 
	\end{equation}
	
	For $0<y\le z_R$,  it follows from the ODE \eqref{eq:J_A-ODE} that 
	$$
	\dfrac{\theta^2}{2}y^2{\cal X}''(y) + (\rho-r+\theta^2)y{\cal X}'(y) - r{\cal X}(y)+I_{u_A}(y)=0\;\;\mbox{for}\;\;0<y\le z_R.
	$$
	
	Since 
	\begin{eqnarray}
	\begin{split}\label{eq:X}
	{\cal X}(y)=
	\begin{cases}
	-Dn_2y^{n_2-1}-\Xi_{\tilde{u}_B}'(y)-\dfrac{\e}{r}\;\;\;&\mbox{for}\;\;y\ge z_R,\vspace{2mm}\\
	-\Xi_{\tilde{u}_A}'(y)\;\;\;&\mbox{for}\;\;\;0<y\le z_R,
	\end{cases}
	\end{split}
	\end{eqnarray}
	it is easy to show that 
	\begin{equation*}
	{\cal X}(\cdot)\in {\cal W}_{loc}^{2,p}(0,\infty)\;\;\mbox{for any}\;\;p\ge 1,
	\end{equation*}
	where $\mathcal{W}^{2,p}(0,\infty),\,p\geq1,$ is the
	completion of $C^\infty(0,\infty)$ under the norm 
	$$
	\|\,V\,\|\,_{\mathcal{W}^{2,p}(0,\infty)}\triangleq \left[\;\int_{0}^{\infty}\,
	\left(\,|\,V\,|\,^p+|\,\partial_y V\,|\,^p
	+|\,\partial_{yy} V\,|\,^p\;\right)\,dy\,\right]^{1\over p}.
	$$
	and $\mathcal{W}^{2,p}_{{loc}}(0,\infty),\,p\geq1,$ is
	the set of all functions whose restrictions to the domain $\mathcal{K}$ belong to $\mathcal{W}^{2,p}(\mathcal{K})$ for any
	compact subset $\mathcal{K}$ of $(0,\infty)$.

	For a given $T>0$, the generalized It\^{o}'s lemma for $W_{loc}^{2,p}$ (see \citet{Krylov}) to ${\cal X}({\cal Y}_t^{y})$ yields that 
	\begin{eqnarray}
	\begin{split}\label{eq:dynamics:X1}
	d{\cal X}({\cal Y}_t^{y})=&{\cal X}'({\cal Y}_t^{y})d{\cal Y}_t^{y} +\dfrac{1}{2}{\cal X}''({\cal Y}_t^{y})(d{\cal Y}_t^{y})^2\\
	=&\left(\dfrac{\t^2}{2}({\cal Y}_t^{y})^2{\cal X}''({\cal Y}_t^{y})+(\rho-r){\cal Y}_t^{y} {\cal X}'({\cal Y}_t^{y})\right)dt-\t {\cal Y}_t^{y} {\cal X}'({\cal Y}_t^{y})dB_t\\
	=&\left( r{\cal X}({\cal Y}_t^{y})-\hat{c}({\cal Y}_t^{y})+\e{\bf 1}_{\{t<\tau_R(y)\}}+(\m-r)\left(-\frac{\t}{\s}{\cal X}'({\cal Y}_t^{y})\right) \right)dt+\s \left(-\frac{\t}{\s}{\cal Y}_t^{y}{\cal X}'({\cal Y}_t^{y})\right) dB_t\\
	=&\left( r{\cal X}({\cal Y}_t^{y})-\hat{c}({\cal Y}_t^{y})+ \e{\bf 1}_{\{t<\tau_R(y)\}}+(\m-r)\left(-\frac{\t}{\s}{\cal X}'({\cal Y}_t^{y})\right) \right)dt+\s \left(-\frac{\t}{\s}{\cal Y}_t^{y}{\cal X}'({\cal Y}_t^{y})\right) dB_t\\
	=&\left( r{\cal X}({\cal Y}_t^{y})-\hat{c}({\cal Y}_t^{y})+ \e{\bf 1}_{\{t<\tau_R(y)\}}\right)dt+\s \left(-\frac{\t}{\s}{\cal Y}_t^{y}{\cal X}'({\cal Y}_t^{y})\right) dB_t^\mathbb{Q},
	\end{split}
	\end{eqnarray}
	where the measure $\mathbb{Q}$ is defined in \eqref{eq:measure-Q}.
	
	Hence, we deduce that 
	\begin{footnotesize}
		\begin{eqnarray}
		\begin{split}\label{eq:dynamics:X2}
		d\left(e^{-rt}{\cal X}({\cal Y}_t^{y})\right)=e^{-rt}\left(\e{\bf 1}_{\{t<\tau_R(y)\}}-\hat{c}({\cal Y}_t^{y})\right)dt+e^{-rt}\s \left(-\frac{\t}{\s}{\cal Y}_t^y{\cal X}'({\cal Y}_t^{y})\right) dB_t^\mathbb{Q}.
		\end{split}
		\end{eqnarray}
	\end{footnotesize}
	
	From \eqref{eq:X}, we deduce that for any $y>0$ 
	\begin{align*}
	{\cal X}(y)\le& D|n_2| z_R^{n_2-1}+|\Gamma_{I_{u_A}}(y)| + |\Gamma_{I_{u_B}}(y)|+\dfrac{\e}{r},
	\end{align*}
	and
	\begin{align*}
	|y{\cal X}'(y)|\le& D|n_2(n_2-1)| z_R^{n_2-1}+|y\Gamma_{I_{u_A}}'(y)| + |y\Gamma_{I_{u_B}}'(y)|.
	\end{align*}
	
	Thus, it follows from Lemma \ref{lem:Xi-Gamma} (d) and (e) that 
	\begin{align*}
	\lim_{T\to\infty}e^{-rT}\mathbb{E}^{\mathbb{Q}}\left[{\cal X}({\cal Y}_T^{y}) \right]=\dfrac{1}{y}\lim_{T\to\infty}e^{-\rho T}\mathbb{E}\left[{\cal Y}_T^{y}{\cal X}({\cal Y}_T^{y}) \right]=0
	\end{align*}
	and the stopped process for $T$
	$$
	{\cal M}_{t\wedge T} =\int_0^{t\wedge T} e^{-r s }\t {\cal Y}_s^{y}{\cal X}'({\cal Y}_s^{y})dB^\mathbb{Q}_s
	$$
	is a martingale (see Lemma 3.4 in \citet{KMZ}). 
	
	By integrating the both sides of \eqref{eq:dynamics:X2} with respect to $t$, we have 
	\begin{footnotesize}
		\begin{eqnarray*}
		\begin{split}
		{\cal X}(y) =\mathbb{E}\left[\int_0^{T}{\xi_t} \left(\hat{c}({\cal Y}_t^{y})-\e){\bf 1}_{\{t<\tau_R(y)\}}\right)dt\right] +e^{-r T} \mathbb{E}^{\mathbb{Q}}\left[{\cal X}({\cal Y}_T^{y}) \right].
		\end{split}
		\end{eqnarray*}
	\end{footnotesize}
	
	Since $u_A, u_B \in \Sigma$, it follows from Proposition \ref{pro:review:KMZ} that 
	\begin{equation*}
	\mathbb{E}\left[\int_0^\infty \xi_t I_{\tilde{u}_A}({\cal Y}_t^{y})dt\right]<\infty\;\;\mbox{and}\;\;\mathbb{E}\left[\int_0^\infty \xi_t I_{\tilde{u}_B}({\cal Y}_t^{y})dt\right]<\infty.
	\end{equation*}
	
	Thus, letting $T\to\infty$, the {\it dominated convergence theorem} implies that
	\begin{footnotesize}
		\begin{eqnarray*}
		\begin{split}
		{\cal X}(y)=\mathbb{E}\left[\int_0^{\infty}{\xi_t} \left(\hat{c}({\cal Y}_t^{y})-\e{\bf 1}_{\{t<\tau_R(y)\}}\right)dt\right].
		\end{split}
		\end{eqnarray*}
	\end{footnotesize}

It follows from \eqref{eq:dynamics:X1} that 
	\begin{equation*}
d{\cal X}({\cal Y}_t^{y})=\left( r{\cal X}({\cal Y}_t^{y})+(\m-r)\Pi({\cal Y}_t^y)-\hat{c}({\cal Y}_t^{y})+ \e{\bf 1}_{\{t<\tau^*\}}\right)dt+\s \Pi({\cal Y}_t^y) dB_t\;\;\;\mbox{for}\;\;t\ge 0.
\end{equation*}
\end{proof}

We derive the agent's optimal policies by establishing the duality between the value function and the dual value function. 
\begin{theorem}~\label{thm:main} Given  $x>-\frac{\e}{r}$. 
	\begin{itemize} 
		\item[(a)] $V(x)$ and $J(y)$ satisfy the duality relationship:
		\begin{equation}\label{eq:duality}
			V(x) = \inf_{y>0} \left(J(y)  + y x\right), \quad 	J(y) = \sup_{x>-\frac{\e}{r}} \left(V(x)  - y x\right).
		\end{equation}
		There exists a unique $y^*>0$ such that 
		\begin{equation*}
			x = -J'(y^*). 
		\end{equation*}
		\item[(b)] The optimal policies $(c^*, \pi^*, \tau^*)$ are 
		\begin{footnotesize}
			\begin{eqnarray*}
				\begin{split}
					c_t^* =\hat{c}({\cal Y}_t^{y^*})=  
					\begin{cases}
						I_B({\cal Y}_t^{y^*})\;\;\;&\mbox{for}\;\;0\le t < \tau^*,\vspace{1mm}\\
						I_A({\cal Y}_t^{y^*})\;\;\;&\mbox{for}\;\;t \ge \tau^*,
					\end{cases},\;\;\;\tau^*=\tau_R(y^*)=\inf\{t\ge0\;\mid {\cal Y}_t^{y^*}\le z_R \},
				\end{split}
			\end{eqnarray*}
		\end{footnotesize}
		and 
		\begin{footnotesize}
			\begin{eqnarray*}
			\pi_t^*=\dfrac{\t}{\s}{\cal Y}_t^* J''({\cal Y}_t^{y^*})
			\end{eqnarray*}
		\end{footnotesize}
		where $X_t = - J^{\prime}({\cal Y}_t^{y^*})$ and ${\cal Y}_t^{y^*} = y^* e^{\rho t }{\xi}_t$.

	\end{itemize}
\end{theorem}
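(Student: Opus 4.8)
The plan is to prove part~(a) by a weak-duality inequality followed by the explicit construction of a control that attains it, and then to read off the optimal policies in~(b) from the attaining control.

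\emph{Step 1 (weak duality).} Fix any admissible $(c,\pi,\tau)\in{\cal A}(x)$. By Proposition~\ref{pro:budget-equaltiy} the static budget constraint \eqref{static_budget_constraint} holds, so the multiplier term in the Lagrangian \eqref{eq:Lagrangian} vanishes and the objective equals $\mathfrak{L}$. Running the chain of inequalities in \eqref{eq:Lagrangian} --- each step using the pointwise bound $u_\bullet(c_t)-{\cal Y}_t^y c_t\le \tilde u_\bullet({\cal Y}_t^y)$, and in the last step the definition \eqref{eq:OSP} of ${\cal P}$ as a supremum over stopping times --- yields that the objective is at most $J_A(y)+{\cal P}(y)+yx=J(y)+yx$ for every $y>0$. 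Taking the supremum over ${\cal A}(x)$ and then the infimum over $y$ gives $V(x)\le \inf_{y>0}(J(y)+yx)$.

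\emph{Step 2 (the multiplier).} Put ${\cal X}(y)=-J'(y)$, whose piecewise form is \eqref{eq:X}. By Lemma~\ref{lem:Xi-Gamma}(a) it equals $\Gamma_{I_{u_A}}$ on $(0,z_R]$ and $-Dn_2y^{n_2-1}+\Gamma_{I_{u_B}}(y)-\tfrac{\e}{r}$ on $[z_R,\infty)$; since $D>0$ (because $h<0$ on $(0,z_R)$) and $n_2<0$, both branches are strictly decreasing (Lemma~\ref{lem:Xi-Gamma}(b)) and they join continuously at $z_R$ by the smooth pasting built into Proposition~\ref{pro:OS}. Lemma~\ref{lem:Xi-Gamma}(c) gives ${\cal X}(0+)=\infty$ and ${\cal X}(\infty)=-\tfrac{\e}{r}$, so ${\cal X}$ is a continuous, strictly decreasing bijection of $(0,\infty)$ onto $(-\tfrac{\e}{r},\infty)$; equivalently $J$ is strictly convex. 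Hence for each $x>-\tfrac{\e}{r}$ there is a unique $y^*>0$ with $x=-J'(y^*)$.

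\emph{Step 3 (attainment, policies, and the second relation).} Consider the triple $(\hat c({\cal Y}^{y^*}),\Pi({\cal Y}^{y^*}),\tau_R(y^*))$ of \eqref{eq:candi-opti} and Lemma~\ref{lem:wealth-y}. The wealth identity in Lemma~\ref{lem:wealth-y} evaluated at $y^*$, together with ${\cal X}(y^*)=x$, shows that \eqref{static_budget_constraint} holds with equality; the sign information in \eqref{eq:X} gives ${\cal X}(y)>-\tfrac{\e}{r}$ on $\{y>z_R\}$ (before retirement) and after-retirement wealth $\Gamma_{I_{u_A}}>0$, verifying the credit constraint \eqref{eq:credit_constraint}, while the polynomial bound of Lemma~\ref{lem:Xi-Gamma}(e) yields the integrability \eqref{con:feasible}; thus the triple lies in ${\cal A}(x)$. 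For this triple every inequality in \eqref{eq:Lagrangian} becomes an equality: the choices $I_{u_B},I_{u_A}$ attain the pointwise suprema defining $\tilde u_B,\tilde u_A$, and $\tau_R(y^*)$ attains the supremum in \eqref{eq:OSP} by Proposition~\ref{pro:OS}. Hence the objective equals $J(y^*)+y^*x$, and combined with Step~1 we get $V(x)\ge J(y^*)+y^*x\ge \inf_{y>0}(J(y)+yx)\ge V(x)$, forcing equality throughout. This proves the first duality relation, identifies the minimizer as $y^*$, and shows the triple is optimal, which is~(b). The second relation follows from the first: $V(x')\le J(y)+yx'$ for all $x'$ gives $J(y)\ge\sup_{x'}(V(x')-yx')$, and equality holds at $x'={\cal X}(y)\in(-\tfrac{\e}{r},\infty)$, since by Step~2 the value $y$ is then the unique minimizer.

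\emph{Main obstacle.} The delicate point is the passage from weak to strong duality: confirming that the candidate triple is genuinely admissible (the credit constraint together with the limiting/transversality behaviour underlying the wealth representation) and that it simultaneously saturates the pointwise conjugate inequalities and the optimal-stopping bound. Almost all of this analytic weight, however, has been discharged in advance by Lemma~\ref{lem:wealth-y} and Proposition~\ref{pro:OS}, so the theorem mainly assembles these pieces with the monotonicity of $-J'$ from Step~2.
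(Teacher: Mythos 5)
Your proof is correct and follows essentially the same route as the paper's: strict convexity of $J$ together with the boundary limits of $J'$ yields the unique multiplier $y^*$, Lemma \ref{lem:wealth-y} supplies the wealth identity and the attaining triple, and the Lagrangian chain gives the weak-duality bound. The only cosmetic differences are that the paper routes the upper bound through $V_A$, $J_A$ and a sup--inf/inf--sup exchange and identifies $\pi^*$ by matching SDEs via Proposition \ref{pro:budget-equaltiy} and the strong Markov property, whereas you read the portfolio directly off Lemma \ref{lem:wealth-y} and also spell out the second conjugacy relation, which the paper's proof leaves implicit.
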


\begin{proof}
	Since $z_R<\bar{z}$ and $h(y)<0$ for $y\in(0,\bar{z})$, we have 
	\begin{equation*}
	D = -\dfrac{2}{\t^2(n_1-n_2)}\int_0^{z_R}\nu^{-n_2-1}{h}(\nu)d\nu>0. 
	\end{equation*}

%	For $y>z_R$, 
%	\begin{align}
%	J(y)={\cal P}(y)+J_A(y)=&Dy^{n_2}+\Xi_{h}(y)+J_A(y)\\
%	=&Dy^{n_2} + \Xi_{\tilde{u}_B}(y)-\Xi_{\tilde{u}_A}(y)+\dfrac{\e}{r}y+\Xi_{\tilde{u}_A}(y)\\
%	=&Dy^{n_2} + \Xi_{\tilde{u}_B}(y)+\dfrac{\e}{r}y.
%	\end{align}
	
By \eqref{eq:form_J},	it follows from Lemma \ref{lem:Xi-Gamma} and $D>0$ that 
	\begin{equation*}
	J''(y) = n_2(n_2-1)Dy^{n_2-2} +\Xi_{\tilde{u}_B}''(y) >0\;\;\mbox{for}\;\;y>z_R.
	\end{equation*}
	
	Since $J(y)=J_A(y)=\Xi_{\tilde{u}_A}(y)$ for $0<y\le z_R$, it is clear that 
	\begin{equation*}
	J''(y) =\Xi_{\tilde{u}_A}''(y) >0\;\;\mbox{for}\;\;0<y\le z_R. 
	\end{equation*}
	Hence, $J(y)$ is strictly convex in $y>0$. 
	
	Note that 
	\begin{equation*}
	\lim_{y\uparrow \infty}{\cal P}'(y)=\dfrac{\e}{r},\;\;\lim_{y\downarrow 0}{\cal P}'(y)=0
	\end{equation*}
	and 
	\begin{equation*}
	\lim_{y\uparrow \infty}J_A'(y)=0,\;\;\lim_{y\downarrow 0}J_A'(y)=-\infty.
	\end{equation*}
	
	Thus, it follows that 
	\begin{align*}
	&\lim_{y\uparrow \infty} {J}'(y)=\dfrac{\e}{r},\vspace{2mm}\\
	&\lim_{y\downarrow 0} {J}'(y)=-\infty.
	\end{align*}
	
	Therefore, there exists a unique $y^*>0$ such that for given $x>-\frac{\e}{r}$
	$$
	x = -J'(y^*). 
	$$

	It follows from Lemma \ref{lem:wealth-y} that 
	\begin{equation*}
	x=-J'(y^*)={\cal X}(y^*)=\mathbb{E}\left[\int_0^{\infty}{\xi_t}\left(\hat{c}({\cal Y}_t^{y^*})-\e{\bf 1}_{\{t<\tau^*\}}\right)dt\right].
	\end{equation*}
    Hence, we have 
		\begin{eqnarray*}
		\begin{split}
		y^*x=&y^*\mathbb{E}\left[\int_0^{\infty}{\xi_t}\left(\hat{c}({\cal Y}_t^{y^*})-\e{\bf 1}_{\{t<\tau^*\}}\right)dt\right]\\
		=&\mathbb{E}\left[\int_0^{\infty}{\cal Y}_t^{y^*}\left(\hat{c}({\cal Y}_t^{y^*})-\e{\bf 1}_{\{t<\tau^*\}}\right)dt\right]\\
		=&\mathbb{E}\left[\int_0^{\tau^*} e^{-\rho t}u_A(\hat{c}({\cal Y}_t^{y^*}))dt+\int_{\tau^*}^\infty e^{-\rho t}u_B(\hat{c}({\cal Y}_t^{y^*}))dt\right]\\-&\mathbb{E}\left[\int_0^{\tau^*} e^{-\rho t}\left(\tilde{u}_B({\cal Y}_t^{y^*} )+{\cal Y}_t^{y^*}\e\right)dt+\int_0^{\tau^*} e^{-\rho t}\tilde{u}_A({\cal Y}_t^{y^*} )dt\right]\\
		=&\mathbb{E}\left[\int_0^{\tau^*} e^{-\rho t}u_A(\hat{c}({\cal Y}_t^{y^*}))dt+\int_{\tau^*}^\infty e^{-\rho t}u_B(\hat{c}({\cal Y}_t^{y^*}))dt\right]-J(y^*).
		\end{split}
		\end{eqnarray*}
	
	Hence,
	\begin{footnotesize}
		\begin{align*}
		J(y^*) +y^*x=&\mathbb{E}\left[\int_0^{\tau^*} e^{-\rho t}u_A(\hat{c}({\cal Y}_t^{y^*}))dt+\int_{\tau^*}^\infty e^{-\rho t}u_B(\hat{c}({\cal Y}_t^{y^*}))dt\right]\\
		\le &\sup_{(c,\pi,\tau\in{\cal A}(x)} \mathbb{E}\left[\int_0^\tau e^{-\rho t} u_A(c_t)dt+\int_\tau^\infty e^{-\rho t}u_B(c_t)dt\right]. 
		\end{align*}
	\end{footnotesize}
	
	For any $y>0$ and $(c,\pi,\tau)\in {\cal A}(x)$,  
	\begin{footnotesize}
		\begin{align*}
		&\mathbb{E}\left[\int_0^\tau e^{-\rho t} u_A(c_t)dt+\int_\tau^\infty e^{-\rho t}u_B(c_t)dt\right]\\
		\le&\mathbb{E}\left[\int_0^\tau e^{-\rho t} u_A(c_t)dt+e^{-\rho \tau}V_A(X_\tau)\right]+y\left(x-\mathbb{E}\left[\int_0^\tau {\xi_t} (c_t - \e)dt + \xi_{\tau}X_{\tau} \right]\right)\\
		\le&\mathbb{E}\left[\int_0^\tau e^{-\rho t}\left(\tilde{u}_A({\cal Y}_t^y)+{\cal Y}_t^y \e\right)dt +e^{-\rho \tau}J_A({\cal Y}_{\tau}^y)\right]+yx. 
		\end{align*}
	\end{footnotesize}
	
	This implies that 
	\begin{align*}
	&\sup_{(c,\pi,\tau)\in{\cal A}(x)} \mathbb{E}\left[\int_0^\tau e^{-\rho t} u_A(c_t)dt+\int_\tau^\infty e^{-\rho t}u_B(c_t)dt\right] \\
	\le&\sup_{\tau\in{\cal S}}\inf_{y>0}\left(\mathbb{E}\left[\int_0^\tau e^{-\rho t}\left(\tilde{u}_A({\cal Y}_t^y)+{\cal Y}_t^y \e\right)dt +e^{-\rho \tau}J_A({\cal Y}_{\tau}^y)\right]+yx\right)\\
	\le&\inf_{y>0}\sup_{\tau\in{\cal S}}\left(\mathbb{E}\left[\int_0^\tau e^{-\rho t}\left(\tilde{u}_A({\cal Y}_t^y)+{\cal Y}_t^y \e\right)dt +e^{-\rho \tau}J_A({\cal Y}_{\tau}^y)\right]+yx\right)=\inf_{y>0}\left(J(y)+yx\right).
	\end{align*}
	
	Overall, we have 
	\begin{align*}
	J(y^*) +y^*x\le \inf_{y>0}\left(J(y)+yx\right) \le J(y^*) +y^*x
	\end{align*}
	and  
	\begin{footnotesize}
		\begin{align*}
		\mathbb{E}\left[\int_0^{\tau^*} e^{-\rho t}u_A(\hat{c}({\cal Y}_t^{y^*}))dt+\int_{\tau^*}^\infty e^{-\rho t}u_B(\hat{c}({\cal Y}_t^{y^*}))dt\right]=\sup_{(c,\pi,\tau)\in{\cal A}(x)}\mathbb{E}\left[\int_0^\tau e^{-\rho t} u_A(c_t)dt+\int_\tau^\infty e^{-\rho t}u_B(c_t)dt\right].
		\end{align*}
	\end{footnotesize}
	
	That is, $c_t^*=\hat{c}({\cal Y}_t^{y^*})$ and $\tau^*=\tau^*(y^*)$ are optimal. 
	
	By Proposition \ref{pro:budget-equaltiy}, there exist a portfolio $\pi_t^*$ such that $(c^*,\pi^*,\tau^*)\in{\cal A}(x)$ and the corresponding wealth process $X^{x, c^*, \pi^*}$ is 
		\begin{equation}\label{eq:compare1}
		dX_t^{x,c^*,\pi^*} = [r X_t^{x,c^*,\pi^*} +(\m-r)\pi_t^* -c_t^* +\e{\bf 1}_{\{t<\tau^*\}}]dt +\s \pi_t^* dB_t \;\;\;t\ge 0
		\end{equation}
		and 
		\begin{equation*}
		X_t^{x,c^*,\pi^*} =\mathbb{E}_t\left[\int_t^\infty \dfrac{ \xi_s}{ \xi_t} (c_s^*-\e 1_{\{s<\tau^* \}})ds\right].
		\end{equation*} 
	The {\it strong Markov property} implies that 
    \begin{equation*}
    \xi_t{\cal X}({\cal Y}_t^{y^*})=\mathbb{E}\left[\int_t^{\infty}\xi_s\left(\hat{c}({\cal Y}_s^{y^*})-\e{\bf 1}_{\{s<\tau_R({\cal Y}_t^{y^*})\}}\right)dt\right]=\xi_t X_t^{x,c^*,\pi^*}.
    \end{equation*}
    
    That is, 
    \begin{equation}\label{eq:compare2}
    {\cal X}({\cal Y}_t^{y^*})=X_t^{x,c^*,\pi^*}\;\;\;\mbox{for}\;\;t\ge0.
    \end{equation}
    
    By Proposition \ref{pro:budget-equaltiy}, 
    \begin{equation}\label{eq:compare3}
    	d{\cal X}({\cal Y}_t^{y^*})=\left( r{\cal X}({\cal Y}_t^{y^*})+(\m-r)\Pi({\cal Y}_t^{y^*})-\hat{c}({\cal Y}_t^{y^*})+ \e{\bf 1}_{\{t<\tau_R(y^*)\}}\right)dt+\s \Pi({\cal Y}_t^{y^*}) dB_t
    \end{equation}
    
    It follows from \eqref{eq:compare1}, \eqref{eq:compare2}, and \eqref{eq:compare3} that 
    $$
    \pi_t^*=\Pi({\cal Y}_t^{y^*})=\frac{\t}{\s}{\cal Y}_t^{y^*}J''({\cal Y}_t^{y^*}),
    $$

\end{proof}

 Lemma \ref{lem:wealth-y} and Theorem \ref{thm:main} imply
$$
X_t = -J^{\prime}({\cal Y}_t^{y^*})=\mathbb{E}\left[\int_t^{\infty} e^{-\rho (s-t) }\frac{\xi_s}{\xi_t} \left(\hat{c}({\cal Y}_s^{y^*})-\e{\bf 1}_{\{s<\tau_R(y^*)\}}  \right)ds\right].
$$

Thus, we establish the following relationship:
\be\label{eq:total_wealth}
 \mathbb{E}\left[\int_t^\infty e^{-\rho (s-t) }\frac{\xi_s}{\xi_t} c_s^*ds\right] =X_t + \mathbb{E}\left[\int_t^{\tau^*} e^{-\rho (s-t) }\frac{\xi_s}{\xi_t} \e ds\right].
\ee
The left-hand side is the present value of lifetime consumption. The right-hand side consists of two components, financial wealth $X_t$ and the present value of labor income, which can be regarded as {\em human wealth}.  Thus, the right-hand side is equal to the agent's {\em total wealth}, comprised of financial wealth and human wealth. Equation \eqref{eq:total_wealth} says that life time consumption is financed by financial wealth and human wealth.

Theorem \ref{thm:main} also implies that the agent retires when the marginal utility of wealth reaches threshold level $z_R$, or equivalently, the agent's wealth level reaches threshold  $x_R$, satisfying
\begin{equation}\label{eq:retirement-x}
x_R = -J'(z_R)=-J_A'(z_R),
\end{equation}
where we have used ${\cal P}'(z_R)=0$, i.e., human wealth is equal to 0 at retirement. 

\begin{pro}\label{pro:wealth-boundary}
	If $\e$ increases, then the optimal retirement threshold $x_R$ increases.
\end{pro}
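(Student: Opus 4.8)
The plan is to reduce the claim to a monotonicity statement about the marginal-utility retirement threshold $z_R$ and then exploit the explicit $\e$-dependence of its defining equation. The starting point is the identity $x_R=-J_A'(z_R)$ from \eqref{eq:retirement-x}. The key structural observation is that $J_A=\Xi_{\tilde{u}_A}$ depends only on $u_A$ (and on $r,\rho,\theta$ through $n_1,n_2$), so the map $z\mapsto -J_A'(z)$ is \emph{independent of $\e$}; the entire $\e$-dependence of $x_R$ is funnelled through $z_R=z_R(\e)$. Since Lemma \ref{lem:wealth-y} gives that $J_A$ is strictly convex, $-J_A'$ is strictly decreasing. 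Hence it suffices to prove that $z_R$ is strictly decreasing in $\e$, for then $x_R=-J_A'(z_R)$ will be strictly increasing.

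Next I would make the $\e$-dependence of the defining relation explicit. For income level $\e$, let $\mathcal{G}(z;\e)=\int_z^\infty \nu^{-n_1-1}h(\nu)\,d\nu$ be the function $\mathcal{G}$ of the proof of Proposition \ref{pro:OS}, with $h(\nu)=\tilde{u}_B(\nu)-\tilde{u}_A(\nu)+\e\nu$, so that $z_R$ is the unique zero of $\mathcal{G}(\cdot\,;\e)$ on $(0,\bar z)$. I would first record that $n_1>1$: evaluating the quadratic \eqref{eq:quadratic} at $n=1$ gives $-r<0$, and since the parabola opens upward with one positive and one negative root, $1$ lies strictly between them, so $n_2<1<n_1$. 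Consequently $\int_z^\infty \nu^{-n_1}\,d\nu=z^{1-n_1}/(n_1-1)$ is finite and
\begin{equation*}
\mathcal{G}(z;\e)=\int_z^\infty \nu^{-n_1-1}\big(\tilde{u}_B(\nu)-\tilde{u}_A(\nu)\big)\,d\nu+\frac{\e}{\,n_1-1\,}\,z^{1-n_1},
\end{equation*}
which for each fixed $z>0$ is strictly increasing in $\e$.

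Finally I would run a comparison argument across two income levels $\e_1<\e_2$ with thresholds $z_R^{(1)},z_R^{(2)}$. From the displayed formula, $\mathcal{G}(z;\e_2)>\mathcal{G}(z;\e_1)$ for every $z>0$; evaluating at $z=z_R^{(1)}$ and using $\mathcal{G}(z_R^{(1)};\e_1)=0$ yields $\mathcal{G}(z_R^{(1)};\e_2)>0$. But the proof of Proposition \ref{pro:OS} shows that, for income $\e_2$, $\mathcal{G}(\cdot\,;\e_2)<0$ on $(0,z_R^{(2)})$ and $\mathcal{G}(z_R^{(2)};\e_2)=0$; hence $\mathcal{G}(z_R^{(1)};\e_2)>0$ forces $z_R^{(1)}>z_R^{(2)}$. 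This is the required strict monotonicity $z_R(\e_2)<z_R(\e_1)$, which combined with the first paragraph gives $x_R(\e_2)>x_R(\e_1)$. Equivalently, an implicit-function-theorem computation gives $z_R'(\e)=z_R/[(n_1-1)\Psi(z_R)]<0$, using $\partial_z\mathcal{G}\big|_{z_R}=-z_R^{-n_1}\Psi(z_R)>0$, which holds because $z_R<\bar z$ and, by Lemma \ref{lem:psi}, $\Psi$ is strictly increasing with $\Psi(\bar z)=0$, so $\Psi(z_R)<0$.

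The main obstacle is conceptual rather than computational: both $z_R$ and the auxiliary level $\bar z$ move with $\e$, so one cannot simply follow the zero of a single fixed curve. The comparison argument circumvents this by using only the sign pattern of $\mathcal{G}(\cdot\,;\e_2)$ in a neighbourhood of its \emph{own} zero $z_R^{(2)}$, together with the pointwise dominance $\mathcal{G}(\cdot\,;\e_2)>\mathcal{G}(\cdot\,;\e_1)$. The one quantitative fact that must be pinned down is $n_1>1$, since it is what makes the labor term $\e z^{1-n_1}/(n_1-1)$ both finite and of the correct (positive) sign.
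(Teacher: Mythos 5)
Your proof is correct and follows essentially the same route as the paper's: both arguments show that ${\cal G}(\cdot\,;\e)$ increases pointwise in $\e$, deduce that its zero $z_R$ decreases, and then convert this to monotonicity of $x_R=-J_A'(z_R)$ via the strict convexity of $J_A$. The only cosmetic differences are that you locate the zero using the sign pattern of ${\cal G}(\cdot\,;\e_2)$ around its own root where the paper instead uses monotonicity of ${\cal G}_2$ on $(0,\bar{z}_2)$ together with $\bar{z}_2>\bar{z}_1$, and that you make explicit the fact $n_1>1$, which the paper uses implicitly.
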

\begin{proof}
	
Let $\Psi_i(y)$ be the current normalized marginal benefit of work for given $\e_i>0$ $(i=1,2)$. 

Suppose that $\e_1>\e_2$. Then, 
$$
\Psi_1(y)>\Psi_2(y) \;\;\;\mbox{for any}\;\;y>0.
$$

Note that there exists a unique $\bar{z}_i>0$ such that $\Psi_i(\bar{z}_i)=0$ for $i=1,2$.

Since $0=\Psi_1(\bar{z}_1)=\Psi_2(\bar{z}_2)>\Psi_2(\bar{z}_1)$, it follows from Lemma \ref{lem:psi} that
$$
\bar{z}_2 >\bar{z}_1. 
$$ 

For $i=1,2,$ there exists a unique $z_{R,i}\in(0,\bar{z}_i)$ such that 
\begin{equation*}
{\cal G}_i(z_{R,i})=0,
\end{equation*}
where 
$$
{\cal G}_i(y)\equiv \int_y^\infty \nu^{-n_1}\Psi_i(\nu)d\nu.
$$

It follows from $\Psi_1(y)>\Psi_2(y)$ for all $y>0$ that 
\begin{equation*}
0={\cal G}_1(z_{R,1})={\cal G}_2(z_{R,2})>{\cal G}_2(z_{R,1}).
\end{equation*}

Since ${\cal G}_2(y)$ is strictly increasing in $y\in(0,\bar{z}_2)$, it follows from $\bar{z}_2>\bar{z}_1>z_{R,1}$ that 
\begin{equation*}
z_{R,2}>z_{R,1}. 
\end{equation*}
That is, the free boundary $z_R$ decreases as $\e$ increases. 

Let $x_{R,i}$ be the optimal retirement threshold corresponding to $z_{R,i}$. 

Since $J_A(y)$ is strictly convex in $y>0$, it follows from $x_{R,i}=-J_A'(z_{R,i})$ that 
\begin{equation*}
x_{R,1}>x_{R,2}. 
\end{equation*}

Therefore, if $\e>0$ increases, the optimal retirement threshold $x_R$ increases.

\end{proof}

\begin{pro}
	If $\m-r>0\; (\m-r<0)$, the optimal portfolio $\pi^*$ jumps downward\;(upward) just after retirement. In other words, 
	\begin{equation*}
	\pi_{\tau^*-}-\pi_{\tau^*+}=\lim_{y\to z_R+}\Pi(y)-\lim_{y\to z_R-}\Pi(y)=-\dfrac{2}{\m-r}\Psi(z_R).
	\end{equation*}
\end{pro}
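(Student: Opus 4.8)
The plan is to exploit the explicit form of the dual value function from Corollary \ref{cor:dual} together with the representation $\pi_t^*=\Pi(\mathcal{Y}_t^{y^*})$ with $\Pi(y)=\frac{\theta}{\sigma}yJ''(y)$ established in Theorem \ref{thm:main}. Since the marginal-value process $\mathcal{Y}_t^{y^*}=y^*e^{\rho t}\xi_t$ has continuous sample paths and retirement occurs the first time it crosses the boundary $z_R$ from above, there is no jump in $\mathcal{Y}$ itself at $\tau^*$; the only source of a jump in the portfolio is the discontinuity of the map $y\mapsto\Pi(y)$ at $y=z_R$. Concretely $\pi_{\tau^*-}=\lim_{y\downarrow z_R}\Pi(y)$ and $\pi_{\tau^*+}=\lim_{y\uparrow z_R}\Pi(y)$, so that
\[
\pi_{\tau^*-}-\pi_{\tau^*+}=\frac{\theta}{\sigma}z_R\bigl(J''(z_R+)-J''(z_R-)\bigr),
\]
and the whole problem reduces to computing the jump of $J''$ across $z_R$. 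Because $J=J_A+\mathcal{P}=\Xi_{\tilde{u}_A}+\mathcal{P}$ and $\mathcal{P}$ is continuously differentiable on $(0,\infty)$ with $\mathcal{P}(z_R)=\mathcal{P}'(z_R)=0$ (Proposition \ref{pro:OS}), the functions $J$ and $J'$ are continuous at $z_R$, while $J''$ is allowed to jump.

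Next I would compute $\mathbb{L}J$ on each side of $z_R$, where $\mathbb{L}=\frac{\theta^2}{2}y^2\frac{d^2}{dy^2}+(\rho-r)y\frac{d}{dy}-\rho$. On $\{0<y<z_R\}$ we have $J=\Xi_{\tilde{u}_A}$, which satisfies $\mathbb{L}J+\tilde{u}_A=0$ by \eqref{eq:J_A-ODE}. On $\{y>z_R\}$ we have $J=Dy^{n_2}+\Xi_{\tilde{u}_B}+\frac{\e}{r}y$; here $\mathbb{L}(Dy^{n_2})=0$ because $n_2$ solves the quadratic \eqref{eq:quadratic}, $\mathbb{L}\Xi_{\tilde{u}_B}=-\tilde{u}_B$ by the analogue of \eqref{eq:J_A-ODE}, and a direct computation gives $\mathbb{L}(\frac{\e}{r}y)=-\e y$. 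Hence $\mathbb{L}J=-\tilde{u}_A(y)$ for $y<z_R$ and $\mathbb{L}J=-\tilde{u}_B(y)-\e y$ for $y>z_R$.

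The final step subtracts these two relations at $y=z_R$. Since $J$ and $J'$ are continuous there, all terms of $\mathbb{L}J$ except $\frac{\theta^2}{2}y^2J''$ match across the boundary, so
\[
\frac{\theta^2}{2}z_R^2\bigl(J''(z_R+)-J''(z_R-)\bigr)=-\bigl(\tilde{u}_B(z_R)-\tilde{u}_A(z_R)+\e z_R\bigr)=-h(z_R)=-z_R\Psi(z_R),
\]
using $h(y)=y\Psi(y)$. Solving gives $J''(z_R+)-J''(z_R-)=-\frac{2\Psi(z_R)}{\theta^2 z_R}$, and substituting into the first display together with $\theta\sigma=\mu-r$ yields $\pi_{\tau^*-}-\pi_{\tau^*+}=-\frac{2}{\mu-r}\Psi(z_R)$. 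The directional claim then follows from $z_R\in(0,\bar{z})$ and Lemma \ref{lem:psi}: since $\Psi$ is strictly increasing with $\Psi(\bar{z})=0$, we have $\Psi(z_R)<0$, so the sign of the jump is controlled by that of $\mu-r$, giving a downward (upward) jump when $\mu-r>0$ ($\mu-r<0$).

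I expect the main obstacle to be one of rigor rather than computation: justifying that the jump in $\mathbb{L}J$ is carried entirely by the second-derivative term. This requires the $C^1$ pasting at $z_R$, which is exactly the smooth-pasting property already secured for $\mathcal{P}$ in Proposition \ref{pro:OS}; once that is invoked, the remainder is the elementary evaluation of $\mathbb{L}$ on the power, conjugate, and linear terms.
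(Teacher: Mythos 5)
Your proposal is correct and follows essentially the same route as the paper: both reduce the portfolio jump to the jump of the second derivative of the dual value function at $z_R$ and evaluate it from the HJB equation on the continuation region together with the $C^1$ (smooth-pasting) fit ${\cal P}(z_R)={\cal P}'(z_R)=0$. The only cosmetic difference is that the paper isolates the jump as $\frac{\t}{\s}z_R{\cal P}''(z_R+)$ using $J=J_A+{\cal P}$ with $J_A$ smooth, whereas you subtract the ODE identities for $J$ on either side of $z_R$; these computations are identical term by term.
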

\begin{proof}
Since $J(y)=J_A(y)+{\cal P}(y)$, it is easy to obtain that 
\begin{align}\label{eq:jump1}
	\lim_{y\to z_R+}\Pi(y)-\lim_{y\to z_R-}\Pi(y)&=\dfrac{\t}{\s}z_R{\cal P}''(z_R).
\end{align}
Note that for $y>z_R$, ${\cal P}(y)$ satisfies 
\begin{equation*}
\dfrac{\t^2}{2}y^2{\cal P}''(y) + (\rho-r)y{\cal P}(y) - \rho {\cal P} + {h}(y)=0.
\end{equation*}

Since ${\cal P}(z_R)={\cal P}'(z_R)=0$, we have 
\begin{align}\label{eq:jump2}
\lim_{y\to z_R+}{\cal P}''(y)=&-\lim_{y\to z_R+}\dfrac{2}{\t^2}\dfrac{1}{y^2}\left((\rho-r)y{\cal P}(y) - \rho {\cal P} + {h}(y)\right)
=-\dfrac{2}{\t^2}\dfrac{1}{z_R}h(z_R)=-\dfrac{2}{\t^2}\dfrac{1}{z_R}\Psi(z_R).
\end{align}

By \eqref{eq:jump1} and \eqref{eq:jump2}, we deduce that 
	\begin{equation*}
\lim_{y\to z_R+}\Pi(y)-\lim_{y\to z_R-}\Pi(y)=-\dfrac{2}{\m-r}\Psi(z_R).
\end{equation*}
\end{proof}

\section{Example}\label{sec:examples}

In this section we provide an example.

For $u\in \Sigma$, let us consider the following utility functions:
\be\label{ex}
u_B(c)=u(c)-l\;\;\;\mbox{and}\;\;u_A(c)=u(kc+b),\;\; l \geq 0, k\geq 1, b\geq 0, \;\;\mbox{with}\;\;{(k-1)^2+l^2\ne 0}.
\ee

The case $l>0, k=1, b=0$ is the model of \citet{CS2006}, the case $l=0, k>1, b=0$ is that of \citet{FP2007} and \citet{DL2010}, and the case $l>0, b>0$ is that of  \citet{BJKP2020}. Thus, the example encompasses major models in the literature\footnote{{\citet{FP2007}, \citet{DL2010} and \citet{BJKP2020} only consider the constant relative risk aversion(CRRA) felicity function.}}.
{
\begin{pro}\label{pro:assumption}
	$u_A$ and $u_B$ in \eqref{ex} satisfy Assumptions \ref{as:utility}-\ref{as:limit-psi}. 
\end{pro}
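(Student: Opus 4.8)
The plan is to reduce every assertion about the pair $(u_A,u_B)$ to a single property of the base function $u\in\Sigma$. I would first record the data that drive all later steps. Since $u_B=u-l$ is a constant vertical shift, $u_B'=u'$, hence $I_{u_B}=I_u$ and $\tilde u_B=\tilde u-l$. Since $u_A(c)=u(kc+b)$, solving $u_A'(c)=k\,u'(kc+b)=y$ gives $I_{u_A}(y)=\tfrac1k\big(I_u(y/k)-b\big)^{+}$, and the substitution $d=kc+b$ in the definition of the conjugate gives $\tilde u_A(y)=\tilde u(y/k)+\tfrac{b}{k}\,y$ on the range $I_u(y/k)\ge b$, which in particular covers all small $y$ (the only range needed for Assumption \ref{as:limit-psi}). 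Note also that $n_1,n_2$ depend only on the market parameters $r,\rho,\t$ through \eqref{eq:quadratic}, so they are common to $u,u_A,u_B$.

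Next I would check Assumption \ref{as:utility}(i). For $u_B$ this is immediate, since its defining data coincide with those of $u$. For $u_A$, strict monotonicity, strict concavity and continuous differentiability pass from $u$ through $u_A'=k\,u'(k\cdot+b)$ and $u_A''=k^2u''(k\cdot+b)$, and $\lim_{c\to\infty}u_A'(c)=k\lim_{c\to\infty}u'(c)=0$. The one substantive point is the integrability condition \eqref{as:well-defined}: from $I_{u_A}(\eta)\le\tfrac1k I_u(\eta/k)$ and the substitution $\eta=ks$,
\[
\int_0^y\eta^{-n_2}I_{u_A}(\eta)\,d\eta\le k^{-n_2}\int_0^{y/k}s^{-n_2}I_u(s)\,ds<\infty,
\]
the right-hand side being exactly \eqref{as:well-defined} for $u$.

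For Assumption \ref{as:utility}(ii) I would argue pointwise. The formulas above give $u_B(I_{u_B}(y))=u(I_u(y))-l$ and $u_A(I_{u_A}(y))=u\big(\max(I_u(y/k),b)\big)$. Because $k\ge1$ forces $y/k\le y$ and $I_u$ is decreasing, $\max(I_u(y/k),b)\ge I_u(y/k)\ge I_u(y)$, so $u_A(I_{u_A}(y))\ge u(I_u(y))\ge u_B(I_{u_B}(y))+l$. If $l>0$ this is already strict; if $l=0$ then $(k-1)^2+l^2\ne0$ forces $k>1$, so $y/k<y$ and strict monotonicity of $I_u$ and $u$ yield the strict inequality. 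On the truncated range where $I_u$ vanishes, strictness instead comes from $b>0$; the residual corner $l=b=0$ arises only for the CRRA specifications to which that case applies, for which $u'(0)=\infty$ and the truncation never occurs.

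The main obstacle is Assumption \ref{as:limit-psi}, which I would establish by proving $\lim_{y\downarrow0}\Psi(y)=-\infty$. Substituting the conjugate formulas, for small $y$,
\[
\Psi(y)=\frac{\tilde u(y)-\tilde u(y/k)}{y}-\frac{l}{y}-\frac{b}{k}+\e .
\]
Since $\tilde u'(y)=-I_u(y)$, the first term equals $-\tfrac1y\int_{y/k}^{y}I_u(s)\,ds\le-\big(1-\tfrac1k\big)I_u(y)$, which tends to $-\infty$ as $y\downarrow0$ whenever $k>1$ because $I_u(0{+})=\infty$; the term $-l/y$ tends to $-\infty$ whenever $l>0$; and the remaining terms stay bounded. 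As $(k-1)^2+l^2\ne0$ guarantees $k>1$ or $l>0$, at least one term diverges to $-\infty$ while none diverges to $+\infty$, so $\Psi(0{+})=-\infty<0$. Combined with Lemma \ref{lem:psi} (which gives that $\Psi$ is strictly increasing with $\Psi(\infty)=\e$), this furnishes the threshold $\bar z$ with $\Psi(\bar z)=0$ and completes the verification. The delicate bookkeeping is concentrated here: controlling the competing $O(1/y)$ contributions and justifying the integral estimate for a general $u\in\Sigma$ rather than only in the CRRA case.
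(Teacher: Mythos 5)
Your proof is correct and follows essentially the same route as the paper's: express $I_{u_A},I_{u_B},\tilde u_A,\tilde u_B$ in terms of the base function $u$, reduce membership in $\Sigma$ and Assumption \ref{as:utility}(ii) to properties of $u$, and verify Assumption \ref{as:limit-psi} by showing $\Psi(0{+})=-\infty$ via the dichotomy $l>0$ or $k>1$ forced by $(k-1)^2+l^2\ne0$. The only differences are cosmetic (you bound $\tilde u(y)-\tilde u(y/k)$ by integrating $-I_u$ where the paper invokes the mean value theorem, and you spell out the integrability check and the truncation corner in Assumption \ref{as:utility}(ii) that the paper leaves implicit).
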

\begin{proof}
	We have
	\begin{align*}
	\tilde{u}_B(y)&=\sup_{c\ge 0}\left(u(c)-l-yc\right)=u(I_u(y))-yI_u(y)-l=\tilde{u}(y)-l,
	\end{align*}
	\begin{align*}
	\tilde{u}_A(y)=\sup_{c\ge 0}\left(u(kc+b)-yc\right)=&\left(u(I_u(\dfrac{y}{k}))-\dfrac{y}{k}\left(I_u(\dfrac{y}{k})-b\right)\right){\bf 1}_{\{y/k\le u'(b) \}} +u(b){\bf 1}_{\{y/k>u'(b)\}}\\
	=&\left(\tilde{u}(\dfrac{y}{k})+b\dfrac{y}{k}\right){\bf 1}_{\{y/k\le u'(b) \}} +u(b){\bf 1}_{\{y/k>u'(b)\}}
	\end{align*}
	and 
	\begin{align}
	I_{u_B}(y)=I_u(y)\;\;\;\mbox{and}\;\;\;I_{u_A}(y)=\dfrac{1}{k}\left(I_u(\dfrac{y}{k})-b\right){\bf 1}_{\{y/k\le u'(b) \}}.
	\end{align}
	
	Since $u\in \Sigma$, it is easy to check that 
	\begin{equation}\label{eq:assumption1}
	u_A,\;u_B \in \Sigma. 
	\end{equation}
	
	Note that 
	\begin{align}\label{eq:assumption2}
	u_A(I_A(y))-u_B(I_A(y))=&\;u\left(I_u(\dfrac{y}{k}){\bf 1}_{\{y/k\le u'(b) \}}+b{\bf 1}_{\{y/k> u'(b) \}}\right)-\left(u(I_u(y))-l\right)\nonumber\\
	\ge&\;u(I_u(\dfrac{y}{k}))-u(I_u(y))+l\\
	>&\;0, \nonumber
	\end{align}
	where we have used the fact $(k-1)^2+l^2 \ne 0$ in the second inequality.
	\medskip
	Since $\tilde{u}_A(y)=\sup_{c\ge 0}\left(u(kc+b)-yc\right)\ge \sup_{c\ge 0}\left(u(kc)-yc\right)=\tilde{u}(\dfrac{y}{k})$, 
	we deduce that 
	\begin{align}
	\Psi(y)=&\dfrac{1}{y}\left(\tilde{u}_B(y)-\tilde{u}_A(y)\right)+\e\\
	\le&\dfrac{1}{y}\left(\tilde{u}(y)-l - \tilde{u}(\frac{y}{k})\right)+\e. \nonumber
	\end{align}
	
	If $k=1$ and $l>0$, then 
	\begin{equation}\label{eq:assumption3}
	\lim_{y\to 0+}\Psi(y) \le \lim_{y\to0+} \dfrac{1}{y}\left(\tilde{u}(y)-l - \tilde{u}(y)\right)+\e =-\infty.
	\end{equation}
	
	If $k\ne1$, then it follows from the mean-value theorem that for any $y>0$ there exists $y_\d\in (y,y/k)$ such that 
	\begin{equation*}
	\Psi(y)\le \frac{1}{y}(\tilde{u}(y)-l- \tilde{u}(\frac{y}{k}))+\e=-\left(1-\frac{1}{k}\right){I}_u(y_\d)-\dfrac{l}{y}+\e. 
	\end{equation*} 
	
	Since $\lim_{y\to 0+}I_u(y)=+\infty$, we deduce that 
	\begin{equation}\label{eq:assumption4}
	\lim_{y\to 0+}\Psi(y)\le\lim_{y\to 0+}\left[\frac{1}{y}(\tilde{u}(y)-l- \tilde{u}(\frac{y}{k}))+\e\right]=\lim_{y\to 0+}\left[-\left(1-\frac{1}{k}\right){I}_u(y_\d)-\dfrac{l}{y}+\e\right]=-\infty. 
	\end{equation} 
	
	From \eqref{eq:assumption1}, \eqref{eq:assumption2}, \eqref{eq:assumption3}, and \eqref{eq:assumption4}, we conclude that $u_A$ and $u_B$ in \eqref{ex} satisfy Assumptions \ref{as:utility}-\ref{as:limit-psi}. 
\end{proof}

By Proposition \ref{pro:assumption}, all results in Sections \ref{sec:optimization}-\ref{sec:optimal_policies} can be applied when $u_A$, $u_B$ are given by \eqref{ex}.

%Under the condition \eqref{ex}, the current normalized marginal benefit of work $\Psi(y)$ is given by 
%\begin{align}
%\Psi(y)=&\dfrac{1}{y}\left(\tilde{u}_B(y)-\tilde{u}_A(y)\right)+\e.
%\end{align}

Recall that $z_R\in(0,\bar{z})$ is a unique solution satisfying
\begin{align*}
{\cal G}(y)=0,
\end{align*}
where $${\cal G}(y)=\int_{y}^\infty \nu^{-n_1-1}{h}(\nu)d\nu=\int_{y}^\infty \nu^{-n_1}{\Psi}(\nu)d\nu.$$

Moreover, ${\cal G}(y)$ is strictly increasing and decreasing in $y\in(0,\bar{z})$ and $y\in(\bar{z},\infty)$, respectively. 
Since $\lim_{y\to 0+}{\cal G}(y)=-\infty$ and ${\cal G}(y)>0\;\;\mbox{for}\;\;y\ge \bar{z}$, we can easily derive the following lemma:
\begin{lem}\label{lem:z_R-position}
	$z_R<ku'(b)$ if and only if ${\cal G}(k u'(b))>0$. 
\end{lem}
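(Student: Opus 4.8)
The plan is to read the biconditional directly off the sign pattern of ${\cal G}$, which the discussion preceding the lemma already pins down. The crucial intermediate claim I would establish first is that
\[
{\cal G}(y)<0 \text{ on } (0,z_R), \qquad {\cal G}(z_R)=0, \qquad {\cal G}(y)>0 \text{ on } (z_R,\infty).
\]
On the interval $(0,\bar z)$ the function ${\cal G}$ is strictly increasing with $\lim_{y\to 0+}{\cal G}(y)=-\infty$ and with its unique zero at $z_R\in(0,\bar z)$; hence ${\cal G}<0$ on $(0,z_R)$ and ${\cal G}>0$ on $(z_R,\bar z)$. For $y\ge\bar z$ the hypothesis ${\cal G}(y)>0$ extends the positivity across the regime where ${\cal G}$ is only decreasing, so positivity holds on all of $(z_R,\infty)$.

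With this sign pattern in hand the lemma is immediate. Setting $y=ku'(b)$, the condition ${\cal G}(ku'(b))>0$ holds precisely when $ku'(b)$ lies in the positivity region $(z_R,\infty)$, that is, precisely when $ku'(b)>z_R$. For the forward implication, $z_R<ku'(b)$ places $ku'(b)$ in $(z_R,\infty)$ and so forces ${\cal G}(ku'(b))>0$; for the converse, if instead $ku'(b)\le z_R$ then ${\cal G}(ku'(b))\le 0$ (with equality only at $z_R$), contradicting the assumed positivity. Thus the two statements are equivalent.

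The only point requiring a moment's care, and hence the main obstacle (though it is a mild one), is the global positivity of ${\cal G}$ on the whole half-line $(z_R,\infty)$ rather than merely near $z_R$: because ${\cal G}$ is increasing on $(z_R,\bar z)$ but decreasing on $(\bar z,\infty)$, one cannot conclude positivity on the decreasing branch from monotonicity alone, and must instead invoke the separately supplied fact that ${\cal G}(y)>0$ for all $y\ge\bar z$. Once these two regimes are stitched together, the equivalence follows with no further computation.
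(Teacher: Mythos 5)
Your proof is correct and follows exactly the route the paper intends: the paper itself only remarks that the lemma follows ``easily'' from the facts that ${\cal G}$ is strictly increasing on $(0,\bar z)$ with $\lim_{y\to 0+}{\cal G}(y)=-\infty$, has its unique zero at $z_R\in(0,\bar z)$, and satisfies ${\cal G}(y)>0$ for $y\ge\bar z$, and your write-up simply makes the resulting sign pattern of ${\cal G}$ explicit before reading off the equivalence at $y=ku'(b)$. You also correctly identify the one point needing care, namely that positivity on the decreasing branch $(\bar z,\infty)$ must come from the separately established fact ${\cal G}(y)>0$ for $y\ge\bar z$ rather than from monotonicity.
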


\subsection{CRRA felicity function}

We assume that the agent has constant relative risk aversion(CRRA), i.e.,
\begin{equation}\label{eq:CRRA}
u(c)=\dfrac{c^{1-\g}}{1-\g},\;\;\;\g\ne 1,\;\g>0,
\end{equation}	
where $\g$ denotes the coefficient of relative risk aversion.

Then, 
\begin{equation*}
u_B(c)=\dfrac{c^{1-\g}}{1-\g}-l\;\;\;\mbox{and}\;\;u_A(c)=\dfrac{(kc+b)^{1-\g}}{1-\g},
\end{equation*}
where $l \geq 0, k\geq 1, b\geq 0, \;\;\mbox{with}\;\;{(k-1)^2+l^2\ne 0}$. 

Note that, under the CRRA utility \eqref{eq:CRRA}, the integrabiity condition \eqref{as:well-defined} is equivalent to the following assumption:
\begin{as}\label{as:Merton} The Merton constant $M$ defined by 
\begin{equation}
M\equiv r+\dfrac{\rho-r}{\g} +\dfrac{\g-1}{\g^2}\dfrac{\t^2}{2}>0
\end{equation}	
is positive. 
\end{as}

We can easily obtain that
\begin{align*}
I_{u_B}(y)&=y^{-\frac{1}{\g}},\;\;\;I_{u_A}(y)=\dfrac{1}{k}\left(\left(\dfrac{y}{k}\right)^{-\frac{1}{\g}}-b\right){\bf 1}_{\{y\le kb^{-\g} \}},\vspace{2mm}\\
\tilde{u}_B(y)&=\dfrac{\g}{1-\g}y^{-\frac{1-\g}{\g}}-l,\;\;\tilde{u}_A(y)=\left(\dfrac{\g}{1-\g}\left(\dfrac{y}{k}\right)^{-\frac{1-\g}{\g}}+b\dfrac{y}{k}\right){\bf 1}_{\{y\le k b^{-\g} \}} +\dfrac{b^{1-\g}}{1-\g}{\bf 1}_{\{y>k b^{-\g}\}}.\nonumber
\end{align*}

By Corollary \ref{cor:dual}, the dual value function $J(y)$ is given by 
\begin{align*}
J(y)=
\begin{cases}
Dy^{n_2}+\Xi_{\tilde{u}_B}(y)+\dfrac{\e}{r}y\;\;\;&\mbox{for}\;\;y\ge z_R,\vspace{2mm}\\
\Xi_{\tilde{u}_A}(y)\;\;\;&\mbox{for}\;\;\;0<y\le z_R.
\end{cases}
\end{align*}

Since 
\begin{align*}
\Xi_{\tilde{u}_B}(y)&=\dfrac{2}{\t^2(n_1-n_2)}\left[y^{n_2}\int_0^y \nu^{-n_2-1}\tilde{u}_B(\nu)d\nu +y^{n_1}\int_y^\infty \nu^{-n_1-1}\tilde{u}_B(\nu)d\nu\right],\\
\Xi_{\tilde{u}_A}(y)&=\dfrac{2}{\t^2(n_1-n_2)}\left[y^{n_2}\int_0^y \nu^{-n_2-1}\tilde{u}_A(\nu)d\nu +y^{n_1}\int_y^\infty \nu^{-n_1-1}\tilde{u}_A(\nu)d\nu\right],
\end{align*}
we have 
\begin{align*}
\Xi_{\tilde{u}_B}(y)=\dfrac{2}{\t^2(n_1-n_2)}\left[y^{n_2}\int_0^y \nu^{-n_2-1}\tilde{u}_B(\nu)d\nu +y^{n_1}\int_y^\infty \nu^{-n_1-1}\tilde{u}_B(\nu)d\nu\right]=\dfrac{1}{M}\dfrac{\g}{1-\g}y^{-\frac{1-\g}{\g}}-\dfrac{l}{\rho},
\end{align*}
and
\begin{footnotesize}
	\begin{eqnarray}
	\begin{split}
	\Xi_{\tilde{u}_A}(y)=
	\begin{cases}
	\phi_1(y)	\;\;&\mbox{for}\;\;\;0<y\le kb^{-\g}, \vspace{1mm}\\
	\phi_2(y)\;\;&\mbox{for}\;\;y>kb^{-\g},
	\end{cases}
	\end{split}
	\end{eqnarray}
\end{footnotesize}
where $\phi_1(y)$ and $\phi_2(y)$ are given by 
\begin{footnotesize}
	\begin{align*}
	\phi_1(y)=&\left(\dfrac{\frac{1}{M}\left(\frac{\g n_2}{1-\g}+1\right)+\frac{n_2-1}{r}-\frac{n_2}{\rho(1-\g)}}{n_1-n_2}\right)b^{1-\g+\g n_1}\left(\dfrac{y}{k}\right)^{n_1}+\dfrac{1}{K}\dfrac{\g}{1-\g}(\dfrac{y}{k})^{-\frac{1-\g}{\g}}+\dfrac{b}{r}\dfrac{y}{k}
	\end{align*}
\end{footnotesize}
and 
\begin{footnotesize}
	\begin{align*}
	\phi_2(y)=& \left(\dfrac{\frac{1}{M}\left(\frac{\g n_1}{1-\g}+1\right)+\frac{n_1-1}{r}-\frac{n_1}{\rho(1-\g)}}{n_1-n_2}\right)b^{1-\g+\g n_2}\left(\dfrac{y}{k}\right)^{n_2}+\dfrac{b^{1-\g}}{\rho(1-\g)},
	\end{align*}
\end{footnotesize}
respectively. 

Since the current normalized marginal benefit of work $\Psi(y)$ is given by 
\begin{equation*}
\Psi(y)=\dfrac{1}{y}\left[\dfrac{\g}{1-\g}y^{-\frac{1-\g}{\g}}-l-\left(\dfrac{\g}{1-\g}\left(\dfrac{y}{k}\right)^{-\frac{1-\g}{\g}}+b\dfrac{y}{k}\right){\bf 1}_{\{y\le k b^{-\g} \}}-\dfrac{b^{1-\g}}{1-\g}{\bf 1}_{\{y>k b^{-\g}\}}\right]+\e,
\end{equation*}
we have
\begin{align*}
{\cal G}(kb^{-\g})=&\int_{kb^{-\g}}^\infty \nu^{-n_1}\Psi(\nu)d\nu\\
=&\int_{kb^{-\g}}^\infty \nu^{-n_1-1}\left(\dfrac{\g}{1-\g}\nu^{-\frac{1-\g}{\g}}-l-\dfrac{b^{1-\g}}{1-\g}+\e\nu\right)d\nu\\
=&\left[\dfrac{\g}{1-\g}\dfrac{1}{1-n_1-\frac{1}{\g}}\nu^{1-n_1-\frac{1}{\g}}+l\dfrac{\nu^{-n_1}}{n_1}+\dfrac{b^{1-\g}}{1-\g}\dfrac{\nu^{-n_1}}{n_1}-\e\dfrac{\nu^{1-n_1}}{n_1-1}\right]_{\nu=kb^{-\g}}^{\infty}\\
=&-\left[\dfrac{\g}{1-\g}\dfrac{1}{1-n_1-\frac{1}{\g}}(kb^{-\g})^{1-n_1-\frac{1}{\g}}+l\dfrac{(kb^{-\g})^{-n_1}}{n_1}+\dfrac{b^{1-\g}}{1-\g}\dfrac{(kb^{-\g})^{-n_1}}{n_1}-\e\dfrac{(kb^{-\g})^{1-n_1}}{n_1-1}\right].
\end{align*}

\noindent Lemma \ref{lem:z_R-position} implies that
\item[(Case 1)] ${\cal G}(kb^{-\g})>0$ (or equivalently, $z_R<kb^{-\g}$)\\

\noindent In this case, the coefficient $D$ is given by 
\begin{align*}
D=&-\dfrac{2}{\t^2(n_1-n_2)}\int_0^{z_R}\nu^{-n_2}\Psi(\nu)d\nu\\
 =&-\dfrac{2}{\t^2(n_1-n_2)}\int_0^{z_R}\nu^{-n_2-1}\left[\dfrac{\g}{1-\g}\n^{-\frac{1-\g}{\g}}-l-\left(\dfrac{\g}{1-\g}\left(\dfrac{\n}{k}\right)^{-\frac{1-\g}{\g}}+b\dfrac{\n}{k}\right)+\e\n\right]d\nu\\
 =&-\dfrac{2}{\t^2(n_1-n_2)}\left[\dfrac{\g}{1-\g}\dfrac{1}{1-n_2-\frac{1}{\g}}\left(1-k^{\frac{1-\g}{\g}}\right)(z_R)^{1-n_2-\frac{1}{\g}}+l\dfrac{(z_R)^{-n_2}}{n_2}-\dfrac{(z_R)^{1-n_2}}{n_2-1}\left(\e-\dfrac{b}{k}\right)\right]
\end{align*}
and $z_R$ is a unique solution of the following algebraic equation:
\begin{align*}
0={\cal G}(z_R)=&\int_{z_R}^\infty \nu^{-n_1}\Psi(\nu)d\nu\\
=&\int_{z_R}^{kb^{-\g}}\nu^{-n_1-1}\left[\dfrac{\g}{1-\g}\n^{-\frac{1-\g}{\g}}-l-\left(\dfrac{\g}{1-\g}\left(\dfrac{\n}{k}\right)^{-\frac{1-\g}{\g}}+b\dfrac{\n}{k}\right)+\e\n\right]d\nu\\
+&\int_{kb^{-\g}}^{\infty}\nu^{-n_1-1}\left[\dfrac{\g}{1-\g}\nu^{-\frac{1-\g}{\g}}-l-\dfrac{b^{1-\g}}{1-\g}+\e\nu\right]d\nu\\
=&\left[\dfrac{\g}{1-\g}\dfrac{1}{1-n_1-\frac{1}{\g}}\left(1-k^{\frac{1-\g}{\g}}\right)(kb^{-\g})^{1-n_1-\frac{1}{\g}}+l\dfrac{(kb^{-\g})^{-n_1}}{n_1}-\dfrac{(kb^{-\g})^{1-n_1}}{n_1-1}\left(\e-\dfrac{b}{k}\right)\right]\\
-&\left[\dfrac{\g}{1-\g}\dfrac{1}{1-n_1-\frac{1}{\g}}\left(1-k^{\frac{1-\g}{\g}}\right)(z_R)^{1-n_1-\frac{1}{\g}}+l\dfrac{(z_R)^{-n_1}}{n_1}-\dfrac{(z_R)^{1-n_1}}{n_1-1}\left(\e-\dfrac{b}{k}\right)\right]\\
-&\left[\dfrac{\g}{1-\g}\dfrac{1}{1-n_1-\frac{1}{\g}}(kb^{-\g})^{1-n_1-\frac{1}{\g}}+l\dfrac{(kb^{-\g})^{-n_1}}{n_1}+\dfrac{b^{1-\g}}{1-\g}\dfrac{(kb^{-\g})^{-n_1}}{n_1}-\e\dfrac{(kb^{-\g})^{1-n_1}}{n_1-1}\right].
\end{align*}

\item[(Case 2)] ${\cal G}(kb^{-\g})\le 0$ (or equivalently, $z_R\ge kb^{-\g}$)\\

\noindent In this case, the coefficient $D$ is given by 
\begin{align*}
D=&-\dfrac{2}{\t^2(n_1-n_2)}\int_0^{z_R}\nu^{-n_2}\Psi(\nu)d\nu\\
=&-\dfrac{2}{\t^2(n_1-n_2)}\int_0^{kb^{-\g}}\nu^{-n_2-1}\left[\dfrac{\g}{1-\g}\n^{-\frac{1-\g}{\g}}-l-\left(\dfrac{\g}{1-\g}\left(\dfrac{\n}{k}\right)^{-\frac{1-\g}{\g}}+b\dfrac{\n}{k}\right)+\e\n\right]d\nu\\
&-\dfrac{2}{\t^2(n_1-n_2)}\int_{kb^{-\g}}^{z_R}\nu^{-n_2-1}\left[\dfrac{\g}{1-\g}\nu^{-\frac{1-\g}{\g}}-l-\dfrac{b^{1-\g}}{1-\g}+\e\nu\right]d\nu\\
=&-\dfrac{2}{\t^2(n_1-n_2)}\left[\dfrac{\g}{1-\g}\dfrac{1}{1-n_2-\frac{1}{\g}}\left(1-k^{\frac{1-\g}{\g}}\right)(kb^{-\g})^{1-n_2-\frac{1}{\g}}+l\dfrac{(kb^{-\g})^{-n_2}}{n_2}-\dfrac{(kb^{-\g})^{1-n_2}}{n_2-1}\left(\e-\dfrac{b}{k}\right)\right]\\
&-\dfrac{2}{\t^2(n_1-n_2)}\left[\dfrac{\g}{1-\g}\dfrac{1}{1-n_2-\frac{1}{\g}}(z_R)^{1-n_2-\frac{1}{\g}}+l\dfrac{(z_R)^{-n_2}}{n_2}+\dfrac{b^{1-\g}}{1-\g}\dfrac{(z_R)^{-n_2}}{n_2}-\e\dfrac{(z_R)^{1-n_2}}{n_2-1}\right]\\
&+-\dfrac{2}{\t^2(n_1-n_2)}\left[\dfrac{\g}{1-\g}\dfrac{1}{1-n_2-\frac{1}{\g}}(kb^{-\g})^{1-n_2-\frac{1}{\g}}+l\dfrac{(kb^{-\g})^{-n_2}}{n_2}+\dfrac{b^{1-\g}}{1-\g}\dfrac{(kb^{-\g})^{-n_2}}{n_2}-\e\dfrac{(kb^{-\g})^{1-n_2}}{n_2-1}\right]
\end{align*}
and $z_R$ is a unique solution of the following algebraic equation:
\begin{align*}
0={\cal G}(z_R)=&\int_{z_R}^\infty \nu^{-n_1}\Psi(\nu)d\nu\\
=&\int_{z_R}^{\infty}\nu^{-n_1-1}\left[\dfrac{\g}{1-\g}\nu^{-\frac{1-\g}{\g}}-l-\dfrac{b^{1-\g}}{1-\g}+\e\nu\right]d\nu\\
=&-\left[\dfrac{\g}{1-\g}\dfrac{1}{1-n_1-\frac{1}{\g}}(z_R)^{1-n_1-\frac{1}{\g}}+l\dfrac{(z_R)^{-n_1}}{n_1}+\dfrac{b^{1-\g}}{1-\g}\dfrac{(z_R)^{-n_1}}{n_1}-\e\dfrac{(z_R)^{1-n_1}}{n_1-1}\right].
\end{align*}

}

\section{Concluding Remarks}\label{sec:conclusion}

 We have studied a retirement and consumption/portfolio choice of an individual economic agent. We have identified minimum assumptions for the problem well-defined and the retirement option valuable. We have solved the problem by formulating a Lagrangian which allows us to consider the difference between the dual value functions before retirement and after retirement as the option value of lifetime labor. We have shown that the option value is the present value of the marginal benefit of work compared to retirement. We have shown human wealth is a component in the derivative of the option value with respect to the marginal value of wealth.
 
 In this paper we have assumed a constant investment opportunity facing the agent. Consideration of the model to allow general market environment as in \citet{YKS} would be a useful topic for future research. We have not considered borrowing constraints, which can play an important role for individuals' choice. The extension incorporating borrowing constraints would also be an interesting topic for future research.

\bibliographystyle{apalike}
\bibliography{library}

\appendix

\section{Proof of Proposition \ref{pro:budget-equaltiy}. }

 We derive budget constraint \eqref{static_budget_constraint}.
For a given $T>0$, we define the equivalent martingale measure $\mathbb{Q}$ by 
\begin{equation}\label{eq:measure-Q}
\dfrac{d \mathbb{Q}}{d \mathbb{P}}=e^{-\frac{1}{2}\t^2T - \t B_T}.
\end{equation}
Then, $B_t^{\mathbb{Q}}=B_t + \t t, t\in[0,T]$ is a standard Brownian motion under measure $\mathbb{Q}$.
Let us consider the process $N_t:= \int_0^t e^{-rs}(c_s- \e {\bf 1}_{\{s<\tau\}})ds +e^{-rt}X_t$.

Then, It\^o's lemma implies $N_t$ is a continuous $\mathbb{Q}$-local martingale, i.e.,
\begin{equation}
dN_t  =e^{-rt}\s\pi_t dB_t^{\mathbb{Q}}.
\end{equation}

By constraint \eqref{eq:credit_constraint} we know
$$
N_t \ge -\e \int_0^{t} e^{-rs} ds -e^{-rt} \frac{\e}{r}=-\dfrac{\e}{r}.
$$

Since $N_t$ is bounded below, $N_t$ is a $\mathbb{Q}$-supermartingale. It follows that 
\begin{equation}
x=\mathbb{E}^{\mathbb{Q}}[N_0]\ge \mathbb{E}^{Q}[N_T]
\end{equation}

Thus, we have 
\begin{align}
x=X_0 &\geq \mathbb{E}^{\mathbb{Q}} \left[\int_0^T e^{-rt} (c_t- \e {\bf 1}_{\{t<\tau\}})ds\right] +\mathbb{E}^{\mathbb{Q}} \left[e^{-rT} X_T\right]\nonumber \\
&= \mathbb{E}\left[\int_0^T \xi_t (c_t- \e {\bf 1}_{\{t<\tau\}})ds\right] +\mathbb{E}\left[\xi_T X_T\right]\nonumber \\
&\geq \mathbb{E} \left[\int_0^T \xi_t(c_t- \e {\bf 1}_{\{t<\tau\}})ds\right] -\mathbb{E} \left[\xi_T \frac{\e}{r} \right]\nonumber \\
&=\mathbb{E} \left[\int_0^T \xi_t (c_t- \e {\bf 1}_{\{t<\tau\}})ds\right] -e^{-rT}\frac{\e}{r},\label{budget_inequality}
\end{align}
where the second inequality follows from constraint \eqref{eq:credit_constraint}. By the dominated convergence theorem the first expectation in the right-hand side of \eqref{budget_inequality} converges to $\mathbb{E} \left[\int_0^{\infty} \xi_t (c_t- \e {\bf 1}_{\{t<\tau\}})dt\right]$. Now budget constraint \eqref{static_budget_constraint} follows from \eqref{budget_inequality} by taking the limit as $T\to\infty$.
 
{The converse can be obtained by  slightly modifying the proof of Theorem 9.4 in \citet{KS}.}

\section{Review of Proposition 4.1 in \citet{KMZ}}\label{sec:KMZ}

%In this appendix, we investigate some properties of the felicity function belonging to the class $\Sigma$.  For this purpose, we first restate Proposition 4.1 in \citet{KMZ} to our notation.

\begin{pro}[Proposition 4.1 in \citet{KMZ}]~\label{pro:review:KMZ}
	Let $f(y)$ be an arbitrary measurable function defined on $(0,\infty)$. Then the following conditions are equivalent:
	\begin{itemize}
		\item[(i)] for every $y>0$ 
		$$
		\mathbb{E}\left[\int_0^\infty e^{-\rho t}|f({\cal Y}_t)|dt \right]<\infty,
		$$
		\item[(ii)] for every $y>0$ 
		$$
		\int_0^y \nu^{-n_2-1} |f(\nu)|d\nu +\int_y^\infty \nu^{-n_1-1}|f(\nu)|d\nu <\infty. 
		$$
	\end{itemize}
	Let us denote $\Xi_f(y)$ by 
	$$
	\Xi_f(y) = \mathbb{E}\left[\int_0^\infty e^{-\rho t} f({\cal Y}_t)dt\right]. 
	$$
	Under the condition (i) or (ii), the following statements are true:
	\begin{itemize}
		\item[(a)] $\liminf_{y\downarrow 0}y^{-n_2}|f(y)|=\liminf_{y\uparrow \infty}y^{-n_1}|f(y)|=0$,
		\item[(b)] $\Xi_f$ has a following form: 
		\begin{footnotesize}
			\begin{align*}
			\Xi_f (y) = \dfrac{2}{\t^2(n_1-n_2)}\left[y^{n_2}\int_0^y \nu^{-n_2-1}f(\nu)d\nu +y^{n_1}\int_y^\infty \nu^{-n_1-1}f(\nu)d\nu\right],
			\end{align*}
		\end{footnotesize}
		\item[(c)] $\Xi_f$ is twice differentiable and 
		$$
		\dfrac{\t^2}{2}y^2\Xi_f''(y) + (\rho-r)y\Xi_f'(y) -\rho \Xi_f(y) +f(y)=0,
		$$
		\item[(d)] there exists a positive constant $C$ such that 
		$$
		|\Xi_f'(y)|\le C(y^{n_1-1}+y^{n_2-1})\;\;\;\mbox{for all}\;\;y>0,
		$$
		\item[(e)] $\lim_{t\to \infty} e^{-\rho t} \mathbb{E}\left[|\Xi_f({\cal Y}_t)|\right]=0$. 
	\end{itemize}
\end{pro}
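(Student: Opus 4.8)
The plan is to exploit that ${\cal Y}_t^y = y e^{\rho t}\xi_t = y\exp\!\big((\rho-r-\tfrac{\theta^2}{2})t - \theta B_t\big)$ is a geometric Brownian motion whose generator is $\mathcal{G} = \tfrac{\theta^2}{2}y^2\partial_{yy} + (\rho-r)y\partial_y = \mathbb{L}+\rho$. A direct computation gives the moment formula
\[
\mathbb{E}\big[({\cal Y}_t^y)^n\big] = y^n e^{Q(n)t}, \qquad Q(n) := \tfrac{\theta^2}{2}n^2 + (\rho-r-\tfrac{\theta^2}{2})n,
\]
and by \eqref{eq:quadratic} the two roots satisfy $Q(n_1)=Q(n_2)=\rho$; hence $y^{n_1}$ and $y^{n_2}$ span the kernel of $\mathbb{L}$, and $e^{-\rho t}\mathbb{E}[({\cal Y}_t^y)^{n_i}] = y^{n_i}$. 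This algebraic fact is what makes the two exponents appear throughout.

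First I would establish the equivalence (i)$\Leftrightarrow$(ii) together with formula (b). By Tonelli's theorem,
\[
\mathbb{E}\Big[\int_0^\infty e^{-\rho t}|f({\cal Y}_t^y)|\,dt\Big] = \int_0^\infty |f(\nu)|\,G(y,\nu)\,d\nu,
\]
where $G(y,\nu) = \int_0^\infty e^{-\rho t}p_t(y,\nu)\,dt$ is the $\rho$-resolvent kernel of the log-normal transition density $p_t(y,\cdot)$. I would compute $G$ either by carrying out the Gaussian integral in $t$, or, more cleanly, by variation of parameters for $\mathbb{L}$ using the fundamental solutions $y^{n_1},y^{n_2}$ of $\mathbb{L}u=0$ (vanishing respectively at $0$ and at $\infty$), normalised by their Wronskian, which produces the constant $\tfrac{2}{\theta^2(n_1-n_2)}$ and the split form $G(y,\nu)=\tfrac{2}{\theta^2(n_1-n_2)}\big(y^{n_2}\nu^{-n_2-1}\mathbf{1}_{\{\nu<y\}}+y^{n_1}\nu^{-n_1-1}\mathbf{1}_{\{\nu>y\}}\big)$. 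Reading off the right-hand side gives exactly the two integrals in (ii), so both sides are finite simultaneously, which is the equivalence; replacing $|f|$ by $f$ (legitimate once (i) holds) yields (b), and (c) follows from the resolvent equation $\mathbb{L}\Xi_f + f = 0$ or by differentiating (b) twice.

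For (a) I would argue by contradiction from (ii): if $\liminf_{\nu\downarrow 0}\nu^{-n_2}|f(\nu)|>0$ then $\nu^{-n_2-1}|f(\nu)|\gtrsim \nu^{-1}$ near $0$, forcing $\int_0^y\nu^{-n_2-1}|f|\,d\nu=\infty$, contradicting (ii); the case $\nu\to\infty$ with $n_1$ is symmetric. For (d) I would differentiate (b); the boundary terms cancel algebraically since $y^{n_2}\!\cdot y^{-n_2-1}f(y) - y^{n_1}\!\cdot y^{-n_1-1}f(y)=0$, leaving
\[
\Xi_f'(y) = \tfrac{2}{\theta^2(n_1-n_2)}\Big[n_2 y^{n_2-1}\!\int_0^y \nu^{-n_2-1}f\,d\nu + n_1 y^{n_1-1}\!\int_y^\infty \nu^{-n_1-1}f\,d\nu\Big],
\]
and the bound $|\Xi_f'(y)|\le C(y^{n_1-1}+y^{n_2-1})$ follows by estimating each tail integral via (ii), with $C$ allowed to depend on $f$.

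The cleanest route to (e) — and the step I would organise everything around — is the Dynkin/Feynman–Kac identity. Applying It\^o to $e^{-\rho t}\Xi_{|f|}({\cal Y}_t^y)$ and using (c) for $|f|$ gives
\[
e^{-\rho t}\Xi_{|f|}({\cal Y}_t^y) + \int_0^t e^{-\rho s}|f({\cal Y}_s^y)|\,ds = \Xi_{|f|}(y) + M_t, \qquad M_t=-\theta\!\int_0^t e^{-\rho s}{\cal Y}_s^y\,\Xi_{|f|}'({\cal Y}_s^y)\,dB_s.
\]
The growth bound (d) together with the finiteness of all moments of the log-normal ${\cal Y}_s^y$ shows the integrand is square-integrable on $[0,t]$, so $M$ is a true martingale and $\mathbb{E}[M_t]=0$. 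Taking expectations and letting $t\to\infty$, the running-integral term increases to $\Xi_{|f|}(y)<\infty$ by (i), forcing $e^{-\rho t}\mathbb{E}[\Xi_{|f|}({\cal Y}_t^y)]\to 0$; since $|\Xi_f|\le\Xi_{|f|}$ pointwise, this yields (e). The main obstacle is the resolvent-kernel computation in the second paragraph, where the precise constant and the exponents $n_1,n_2$ must be pinned down; the justification of the martingale property in (e) is the other delicate point, and it is exactly where the moment formula of the first paragraph and the estimate (d) are used.
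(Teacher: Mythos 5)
The paper does not actually prove this proposition: it is stated in the appendix purely as a review of Proposition 4.1 in \citet{KMZ}, and the authors defer entirely to that reference. Your proof is therefore necessarily a different route, and it is a correct, self-contained one: the resolvent-kernel computation is right (the Wronskian normalisation $\tfrac{2}{\theta^2(n_1-n_2)}$ checks out, e.g.\ against $f\equiv 1$ giving $1/\rho$ and $f(\nu)=\nu$ giving $y/r$), Tonelli gives (i)$\Leftrightarrow$(ii) and (b) simultaneously, the contradiction argument for (a) is exactly the standard one, and the Dynkin identity applied to $\Xi_{|f|}$ is a clean way to get (e) with monotone convergence doing the work. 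Two points deserve tightening. First, for merely measurable $f$ the function $\Xi_f$ in (b) is $C^1$ with absolutely continuous derivative, so ``twice differentiable'' in (c) holds only a.e.\ (i.e.\ $\Xi_f\in{\cal W}^{2,p}_{loc}$), and the It\^o step in (e) should invoke the generalized It\^o formula for ${\cal W}^{2,p}_{loc}$ functions (as the paper itself does, citing \citet{Krylov}) rather than the classical one. Second, in (d) the phrase ``estimating each tail integral via (ii)'' hides the one line that actually produces a $y$-uniform constant: you need to split $\int_0^y=\int_0^1+\int_1^y$ and use $\nu^{-n_2-1}=\nu^{n_1-n_2}\nu^{-n_1-1}\le y^{n_1-n_2}\nu^{-n_1-1}$ on $[1,y]$ (and the symmetric comparison near $0$ for the other term), which converts the growing integral $\int_0^y\nu^{-n_2-1}|f|\,d\nu$ into $O(1+y^{n_1-n_2})$ and hence yields $|\Xi_f'(y)|\le C(y^{n_1-1}+y^{n_2-1})$ globally. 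With those two clarifications the argument is complete.
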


\end{document}